
\documentclass[12pt]{article}   
\usepackage{graphicx,psfrag}
\usepackage{color}
\usepackage{array}
\usepackage{colortbl}
\pagestyle{plain}
\pagenumbering{arabic}
\DeclareGraphicsRule{*}{mps}{*}{}
\usepackage{tabularx}
\textwidth 15cm                 
\textheight 21cm                
\evensidemargin 6mm             
\oddsidemargin 6mm              
\topmargin 0mm                  
\setlength{\parskip}{1.5ex}     
\linespread{1.2}
\newtheorem{theorem}{Theorem}
\newtheorem{corollary}[theorem]{Corollary}
\newtheorem{conjecture}[theorem]{Conjecture}

\newtheorem{lemma}[theorem]{Lemma}

\newtheorem{example}[theorem]{Example}

\newtheorem{question}[theorem]{Question}

\usepackage{tikz}
\usetikzlibrary{arrows,chains,matrix,positioning,scopes}
\newenvironment {proof} {{\it
Proof.}}{\hspace*{\fill}$\Box$\par\vspace{4mm}}
\usepackage{amsmath}
\usepackage{amsfonts}

\usepackage{geometry}
\geometry{hmargin={.75in,1in},vmargin={0.75in,1in}}
\usepackage{graphicx}
\usepackage{amssymb}
\usepackage{mathrsfs}
\usepackage[all]{xy}
\usepackage{amsfonts}
\usepackage{float}
\usepackage{enumerate}
\usepackage{amsmath}
\usepackage{cases}
\usepackage{amsfonts}
\usepackage{graphicx,psfrag}
\usepackage{array}
\usepackage{colortbl}
\usepackage{amsmath}
\usepackage{fancyhdr}
\usepackage{epstopdf}
\usepackage{subfigure}
\usepackage{color}
\usepackage{enumerate}
\usepackage{tikz}
\usepackage{amsmath}
\usetikzlibrary{arrows,chains,matrix,positioning,scopes}
\usepackage{graphicx,psfrag}
\newfont{\bb}{msbm10}

\def\:{\! :\!}

\usepackage{tikz}

\bibliographystyle{plain}
\begin{document}

\title{Pattern -Avoiding  $(0,1)$-Matrices}

 \author{Richard A. Brualdi\\
 Department of Mathematics\\
 University of Wisconsin\\
 Madison, WI 53706 USA\\
 {\tt brualdi@math.wisc.edu}
 \and
 Lei Cao\\
 Department of Mathematics\\
 Nova Southeastern  University\\
Ft. Lauderdale, FL 33314 USA\\
 {\tt lcao@nova.edu}
 }

\maketitle

 \begin{abstract} We investigate pattern-avoiding $(0,1)$-matrices as generalizations of pattern-avoiding permutations. Our emphasis  is on 123-avoiding and 321-avoiding patterns for which we obtain exact results as to the maximum number of 1's such matrices can have. We also give algorithms which, when carried out in all possible ways,  construct all of the pattern-avoiding matrices of these two types.
 
\medskip
\noindent {\bf Key words and phrases:  permutation, pattern-avoiding, 123-avoiding, 312-avoiding, $(0,1)$-matrices.}

\noindent {\bf Mathematics  Subject Classifications: 05A05, 05A15, 15B34.}
\end{abstract}

\smallskip
\smallskip

\section{Introduction}
Let $n\ge 2$ be a positive integer, and let ${\mathcal S}_n$ be the set of permutations of $\{1,2,\ldots,n\}$. Let $k$  be a positive integer with $k\le n$, and let $\sigma=(p_1,p_2,\ldots,p_k)$  be a permutation in ${\mathcal S}_k$.  A permutation $\pi=(i_1,i_2,\ldots,i_n)\in {\mathcal S}_n$ {\it contains the pattern $\sigma$}  provided there exists  $1\le t_1<t_2<\cdots<t_k\le n$ such that  $i_{t_r}<i_{t_s}$  if and only if $p_{t_r}<p_{t_s}$ ($1\le r<s\le k$);
otherwise, $\pi$ is called {\it $\sigma$-avoiding}; we also say that $\pi$ {\it avoids} $\sigma$. Patterns in permutations and, in particular, pattern avoidance, have been extensively studied. If $k=2$ and $\sigma=(1,2)$, then $\sigma$-avoiding means no ascents and the anti-identity permutation $(n,\ldots,2,1)$ is the  only  $\sigma$-avoiding permutation in ${\mathcal S}_n$. If $\sigma=(2,1)$, then $\sigma$-avoiding means no descents and only the identity permutation $(1,2,\ldots,n)$ is $\sigma$-avoiding. There is a chapter on pattern avoidance  in \cite{Bona}.

Pattern avoidance in permutations  is already interesting when $k=3$. Using reversal ($i_j$ goes to position ${n+1-j}$) and complementation ($i_j$ is replaced with $n+1-i_j$), there are only two permutations 
that are inequivalent with respect to pattern avoidance: $(1,2,3)$ (which is equivalent to $(3,2,1)$),
and $(3,1,2)$ (which is equivalent to $(2,1,3)$, $(1,3,2)$, and $(2,3,1)$). 
For each of the six permutations $\sigma$ in ${\mathcal S}_3$  the number of $\sigma$-avoiding permutations is the Catalan number
\[C_n=\frac{{{2n}\choose n}}{n+1}.\]
It is sometime useful to think of $1$ as S(mall), $2$ as M(edium), and $3$ as L(arge). Then SML and LSM are the only two patterns of size 3 that need be considered.

Again let $\sigma=(p_1,p_2,\ldots,p_k)$  be a permutation in ${\mathcal S}_k$. 
Since permutations $\pi=(i_1,i_2,\ldots,i_n)\in {\mathcal S}_n$ and $n\times n$ permutation matrices are in a bijective correspondence:
\[(i_1,i_2,\ldots,i_n)\leftrightarrow P=[p_{ij}]\mbox{ where $p_{k,i_k}=1\ (k=1,2,\ldots,n)$ and $p_{ij}=0$, otherwise},\]
 $\sigma$-avoiding permutations can be studied in terms of permutation matrices.
An {\it $\sigma$-avoiding permutation matrix} is a permutation matrix corresponding to a $\sigma$-avoiding permutation. Thus an $n\times n$ permutation matrix $P$ is (1,2,3)-avoiding provided it does not contain as a submatrix the $3\times 3$ identity matrix 
\begin{equation}\label{eq:123}
I_3=\left[\begin{array}{ccc}
1&0&0\\
0&1&0\\
0&0&1\end{array}\right];\end{equation}
 $P$ is (3,1,2)-avoiding provided it does not contain as a  submatrix the $3\times 3$  matrix 
\begin{equation}\label{eq:312}
\Pi_3=\left[\begin{array}{ccc}
0&0&1 \\ 
1&0&0 \\ 
0&1&0 \end{array}\right].\end{equation}

From now on we usually  use the  briefer notation $j_1j_2\cdots j_k$ for a permutation $(j_1,j_2,\ldots,j_k)\in {\mathcal S}_k$.
Pattern avoidance when expressed in terms of permutation matrices  leads to some interesting questions. 
The $n\times n$ matrix $J_n$ of all 1's can be decomposed into both  123-avoiding permutation matrices and 312-avoiding permutation matrices. In fact, $J_n$ can be decomposed into permutation matrices that are both 123-avoiding and 312-avoiding. This can be done by considering the $n\times n$ permutation matrices corresponding to the $n$ permutations
\[(n,n-1,n-2,\ldots,2,1), (n-1,n-2,\ldots,2,1,n),
(n-2,\ldots,2,1,n,n-1),\ldots,(1,n,n-1,\ldots,2)\]
each of which consists of an decreasing sequence followed by   another decreasing sequence (empty in one case).
For example,
\begin{equation}\label{eq:J4}
J_4=\left[\begin{array}{c|c|c|c}
&&&1\\ \hline
&&1&\\ \hline
&1&&\\ \hline
1&&&\end{array}\right]
\left[\begin{array}{c|c|c|c}
&&1&\\ \hline
&1&&\\ \hline
1&&&\\ \hline
&&&1\end{array}\right]
\left[\begin{array}{c|c|c|c}
&1&&\\ \hline
1&&&\\ \hline
&&&1\\ \hline
&&1&\end{array}\right]+
\left[\begin{array}{c|c|c|c}
1&&&\\ \hline
&&&1\\ \hline
&&1&\\ \hline
&1&&\end{array}\right].\end{equation}
Recall that a permutation $i_1i_2\cdots i_n$ is a {\it Grassmannian} provided that it contains at most one descent (so at most one consecutive 21-pattern) and is a {\it reverse-Grassmannian}
provided that it contains at most one ascent (so at most one consecutive $12$-pattern). The above decomposition  of $J_n$, illustrated in (\ref{eq:J4}),  is a decomposition of $J_n$ into reverse-Grassmanians. A
123-avoiding permutation need not be  a Grassmannian nor a reverse-Grassmanian, for instance, $563412$ is  a 123-avoiding permutation but is not a Grassmannian as it has two decreases nor is it a reverse-Grassmannian as it has two increases.

The notion of   pattern-avoiding permutations (so pattern-avoiding permutation matrices)  has been extended to general $(0,1)$-matrices. 
Given a $k\times k$  permutation matrix $Q$, one seeks the largest number of 1's in an $n\times n$ $(0,1)$-matrix $A$ such that $A$ does not contain a $k\times k$ submatrix $Q'$ such that $Q\le Q'$ (entrywise), that is, $Q'$ has 1's wherever $Q$ has 1's but is allowed to have 1's where $Q$ has $0$'s. Put more simply, the matrix $A$ does not  contain the pattern prescribed by $Q$.
The emphasis here seems to have been on (asymptotic) inequalities, in particular,  the F\H uredi-Hajnal conjecture \cite{Furedi}: {\it Let $P$ be any permutation matrix. Then there is a constant $c_P$ such that the number of $1$'s in  an $n\times n$  $(0,1)$-matrix $A$ that avoids $P$ is bounded by $c_Pn$}. 
Marcus and Tardos \cite{Marcus} solved this conjecture by proving: {\it if $P$ is $k\times k$, then the number of $1$'s in $A$ is bounded by $2k^4 {{k^2}\choose k}n$.} (See also pages 159--164 of \cite{Bona}). There have been considerable investigations on avoiding patterns of various types  in $(0,1)$-matrices, again mostly in terms of asymptotic inequalities. We mention, for instance, the papers \cite{Furedi,  Marcus, SP1, SP2,GT} and the thesis \cite{BK}. 
Our motivation comes from pattern avoiding in permutation matrices and  the possibility of obtaining exact results for specific permutation matrices $Q$ with equality characterizations.
We focus on the two inequivalent patterns of length $3$, namely $123$ and $312$.  In the case of $123$ we can more generally handle the case of the pattern $12\cdots k$ for  arbitrary $k$.
 Also, there is no reason to assume that the matrices $A$ are  square.
 
  In the next section 
we consider  $12\cdots k$-avoiding, $m\times n$ $(0,1)$-matrices. We determine the maximum number $c_{m,n}(12\cdots k)$ of 1's such a matrix can contain, and characterize the matrices achieving this maximum. Surprisingly, it turns out that  one can use a `greedy' algorithm that always results in a 
$12\cdots k$-avoiding, $m\times n$ $(0,1)$-matrix with this maximum number $c_{m,n}(12\cdots k)$ of 1's, and when carried out in all possible ways produces all such matrices.  In the following section we obtain similar results for $312$-avoiding matrices and its maximum number $c_{m,n}(312)$ of 1's. We had hoped to settle  the  case of $k1\cdots(k-1)$ for arbitrary $k$,  but its solution has eluded us thus far, although we are confident of the correct answer. In the final section we consider some questions for future investigations.

\section{$12\cdots k$-avoiding $(0,1)$-Matrices}

Let $A$ be an $m\times n$  $(0,1)$-matrix. The matrix $A$ is {\it $12\cdots k$-avoiding} provided it does not contain a $k\times k$ submatrix of the form 
\begin{equation}\label{eq:123matrix}
\left[\begin{array}{c|c|c|c|c}
1&*&*&*&*\\ \hline
*&1&*&*&*\\ \hline
\vdots&* &\ddots&*&*\\ \hline
*&*&\cdots &1&*\\ \hline
*&*&\cdots&*&1\end{array}\right]\end{equation}
where $*$ denotes either a 0 or a 1. In terms  of the description in the previous paragraph, $Q=I_k$, the $k\times k$ identity matrix. 

We start with the very simple case of $k=2$ which contains many of the ideas used for general $k$. Let $A=[a_{ij}]$ be an $m\times n$ $(0,1)$-matrix with $m,n\ge 2$.
Then  $A$ is  {\it $12$-avoiding}  provided that  $A$ does not have  $2\times 2$ submatrix of the form
\[\left[\begin{array}{cc} 1&*\\ *&1\end{array}\right],\]
where again $*$ denotes a 0 or a 1. A $12$-avoiding matrix can be regarded as a generalization of a non-increasing sequence (permutation) to an arbitrary $(0,1)$-matrix.
The matrix $A$ has $(m+n-1)$ (left-to-right) diagonals (sometimes called {\it Toeplitz diagonals} as the entries on them are constant in Toeplitz matrices) starting from any entry in its first row or first column. If $A$ is $12$-avoiding, each of these diagonals can contain at most one 1 and hence  $A$ can contain at most $(m+n-1)$ 1's. We can attain $(m+n-1)$ 1's with the matrix $A$ whose first row and first column contains  all 1's and all other entries are 0. We call such a $m\times n$  $(0,1)$-matrix a {\it trivial $12$-avoiding matrix}.
Such a matrix contains a $(m-1)\times (n-1)$ zero submatrix $O_{m-1,n-1}$ in its lower right corner.

An $m\times n$ $(0,1)$-matrix $A=[a_{ij}]$ has a {\it complete right-to-left zigzag path}, abbreviated to {\it complete R-L zigzag path},
provided (i) it has $(m+n-1)$ $1$'s,  (ii) $a_{1n}=a_{m1}=1$, and (iii) every 1 of $A$  except for $a_{m1}$
has a 1 immediately to its left or immediately below it, but not both. A {\it complete left-to-right zigzag path} ({\it complete L-R
zigzag path}) is defined in a similar way with $a_{11}=a_{mn}=1$.   An $m\times n$ matrix with a complete R-L zigzag path has exactly $(m+n-1)$\ 1's and $(m-1)(n-1)$ $0$'s. The matrix in Example \ref{ex:12avoiding} below has a complete R-L zigzag path. Notice that its $0$'s  are obtained by starting with an $m\times n$ array containing an $(m-1)\times (n-1)$ zero matrix in 
 its lower right corner, partitioning it into a {\it Ferrers array} and its complement (which we call a  {\it reverse-Ferrers array}), and then translating the Ferrers array one unit in the northwest direction (i.e. up along the diagonals).

\begin{example}\label{ex:12avoiding}{\rm
The following is an   $12$-avoiding $8\times 8$ $(0,1)$-matrix with the maximum number 15 of $1$'s:
\[\left[\begin{array}{c|c|c|c|c|c|c|c}
\phantom{0} &&&&&&&\\ \hline
&\bf 0&\bf 0&\bf 0&\bf 0&\bf 0&\bf 0&\bf 0\\ \hline
&\bf 0&\bf 0&\bf 0&\bf 0&\bf 0&\bf 0&\bf 0\\ \hline
&\bf 0&\bf 0&\bf 0&\bf 0&\bf 0&0&0\\ \hline
&\bf 0&\bf 0&\bf 0&\bf 0&\bf 0&0&0\\ \hline
&\bf 0&\bf 0&0&0&0&0&0\\ \hline
&\bf 0&\bf 0&0&0&0&0&0\\ \hline
&0&0&0&0&0&0&0\end{array}\right]\rightarrow
\left[\begin{array}{c|c|c|c|c|c|c|c}
\bf 0&\bf 0&\bf 0&\bf 0&\bf 0&\bf 0&\bf 0&1\\ \hline
\bf 0&\bf 0&\bf 0&\bf 0&\bf 0&\bf 0&\bf 0&1\\ \hline
\bf 0&\bf 0&\bf 0&\bf 0&\bf 0&1&1&1\\ \hline
\bf 0&\bf 0&\bf 0&\bf 0&\bf 0&1&0&0\\ \hline
\bf 0&\bf 0&1&1&1&1&0&0\\ \hline
\bf 0&\bf 0&1&0&0&0&0&0\\ \hline
1&1&1&0&0&0&0&0\\ \hline
1&0&0&0&0&0&0&0\end{array}\right].\]
}\hfill{$\Box$}\end{example}

 We have the following elementary lemma.

\begin{lemma}\label{lem:12avoiding}
Let $A=[a_{ij}]$ be an  $12$-avoiding $m\times n$  $(0,1)$-matrix with the maximum number $(m+n-1)$ of $1$'s. Then $A$ is obtained from the trivial $m\times n$ $12$-avoiding matrix by translating a Ferrers array one unit in the northwest direction creating a complete R-L zigzag path of $1$'s.
\end{lemma}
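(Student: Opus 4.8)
The plan is to extract from maximality a rigid combinatorial structure and then translate it into the language of the statement. First I would observe that equality in the diagonal bound forces \emph{exactly one} $1$ on each of the $m+n-1$ Toeplitz diagonals: two $1$'s on a common diagonal $\{(i,j):j-i=c\}$, say at $(i_1,j_1)$ and $(i_2,j_2)$ with $i_1<i_2$, necessarily have $j_1<j_2$ and so form the forbidden pattern $\left[\begin{smallmatrix}1&*\\ *&1\end{smallmatrix}\right]$; hence each diagonal holds at most one $1$, and since there are $m+n-1$ of them and $A$ has $m+n-1$ ones, each holds precisely one. The two extreme diagonals are the singletons $\{(1,n)\}$ and $\{(m,1)\}$, so $a_{1n}=a_{m1}=1$, which is part of condition (ii) in the definition of a complete R-L zigzag path, and condition (i) is the hypothesis.

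Next I would control how the position of the $1$ moves between consecutive diagonals. Let $r_c$ be the row of the unique $1$ on the diagonal $\{(i,j):j-i=c\}$, so this $1$ sits at $(r_c,\,r_c+c)$. Comparing the $1$'s on diagonals $c$ and $c-1$ and invoking $12$-avoidance twice: if $r_{c-1}<r_c$ then $(r_{c-1},\,r_{c-1}+c-1)$ lies strictly northwest of $(r_c,\,r_c+c)$, a forbidden pair; and if $r_{c-1}\ge r_c+2$ then $(r_c,\,r_c+c)$ lies strictly northwest of $(r_{c-1},\,r_{c-1}+c-1)$, again forbidden. Hence $r_{c-1}\in\{r_c,\,r_c+1\}$ for every $c$. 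Reading the $1$'s in order of decreasing diagonal index therefore gives a lattice path from $(1,n)$ to $(m,1)$ that at each step moves either one cell left (when $r_{c-1}=r_c$) or one cell down (when $r_{c-1}=r_c+1$); since the unique $1$ on diagonal $c-1$ is then either immediately left of, or immediately below, the $1$ on diagonal $c$, and since those two candidate cells both lie on diagonal $c-1$ so cannot both be $1$'s, this is exactly a complete R-L zigzag path through all the $1$'s of $A$, with $a_{m1}$ the only $1$ lacking a successor. This establishes conditions (i)--(iii).

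Finally I would match the zigzag picture with the Ferrers-array description. In each column $j$ the $1$'s of $A$ occupy a contiguous run of rows $u_j\le i\le v_j$, with $v_j$ weakly increasing and $u_j=v_{j+1}$ as $j$ decreases (the path enters column $j$ from column $j+1$ by a left step, and $v_1=m$, $u_n=1$). The $0$'s of $A$ then split into the cells strictly below the path, which form a reverse-Ferrers (bottom-right staircase) array $F^c$ contained in rows $2,\dots,m$ and columns $2,\dots,n$ with column heights $m-v_j$, and the cells strictly above the path, which are precisely the Ferrers array $F$ complementary to $F^c$ in that $(m-1)\times(n-1)$ block, but shifted one unit to the northwest. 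Running the same bookkeeping in reverse shows that performing the described translation on the trivial matrix always produces a matrix whose $1$'s form a complete R-L zigzag path, so the two formulations coincide and $A$ is obtained as claimed.

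I expect the only real effort to be in this last step: not a hard idea but careful bookkeeping to verify that ``cells above / below the R-L zigzag path'' correspond exactly to a Ferrers array translated one unit northwest and to its complement inside the $(m-1)\times(n-1)$ zero block. The geometric heart of the proof---that $12$-avoidance pins the row index $r_c$ to change by $0$ or $1$ between neighbouring diagonals---is short, and it is worth stressing that the bound $r_{c-1}\le r_c+1$ uses only $12$-avoidance together with the already-established fact that neighbouring diagonals are nonempty, not the full force of maximality.
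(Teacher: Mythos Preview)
Your proof is correct. Both your argument and the paper's rest on the same observation---that each Toeplitz diagonal carries exactly one $1$---but the organization differs. The paper argues by a brief induction: from $a_{1n}=1$ exactly one of $a_{1,n-1}$, $a_{2n}$ is $1$, the rest of that row or column is forced to be $0$, and one inducts on the smaller matrix. You instead give a direct global argument, introducing the row index $r_c$ on diagonal $c$ and showing $r_{c-1}\in\{r_c,r_c+1\}$ by two applications of $12$-avoidance; this immediately exhibits the $1$'s as a monotone lattice path from $(1,n)$ to $(m,1)$ without any inductive peeling. Your version is longer but more self-contained, and it makes the ``not both'' clause in the definition of a complete R-L zigzag path transparent (both neighbour cells lie on diagonal $c-1$, which holds a unique $1$). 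The final Ferrers-array bookkeeping you flag as the main remaining effort is indeed routine and matches what the paper leaves implicit.
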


\begin{proof} The matrix $A$ has $(m+n-1)$\ 1's with $a_{1n}=1$, and either $a_{1,n-1}=1$ or $a_{2n}=1$ but not both. We assume that $a_{1,n-1}=1$ and $a_{2,n}=0$, since the other case can be handled similarly. We must have $a_{in}=0$ for all $i=3,\ldots,n$ for otherwise we have a 12-pattern. A simple induction now completes the proof.
\end{proof}

Note that a similar result holds for a $21$-avoiding matrix with a `Ferrers array' replaced with `reverse Ferrers array' and `R-L zigzag path' replaced with `L-R zigzag path.

For later use we need to consider a generalization of Lemma \ref{lem:12avoiding}. Let $0\le e_1\le e_2\le \cdots\le e_m\le n$. Define an $m\times n$  {\it staircase matrix} $A(e_1,e_2,\ldots,e_m)$ to be the matrix  with $e_i$ left-justified positions in row $i$ removed for $1\le i\le m$. For instance, with $m=5$ and $n=6$, we have
\[A(3,4,5,5,6)=\left[\begin{array}{c|c|c|c|c|c}
\phantom{X}&\phantom{X}&\phantom{X}&\cellcolor[gray]{0.8}\phantom{X}&\cellcolor[gray]{0.8}\phantom{X}&\cellcolor[gray]{0.8}\phantom{X}\\ \hline
&&&&\cellcolor[gray]{0.8}&\cellcolor[gray]{0.8}\\ \hline
&&&&&\cellcolor[gray]{0.8}\\ \hline
&&&&&\cellcolor[gray]{0.8}\\ \hline
&&&&&\\ \hline
\end{array}\right].\]
Thus we remove a partition of positions from the upper right of an $m\times n$ array. If each position of a staircase matrix is either 0 or 1, then we refer to a {\it $(0,1)$-staircase matrix} and we write $A(e_1,e_2,\ldots,e_m)=[a_{ij}] $.

\begin{lemma}\label{lem:12avoiding2}
Let $A(e_1,e_2,\ldots,e_m)=[a_{ij}] $  be a $12$-avoiding $m\times n$ $(0,1)$-staircase matrix. Then the number of $1$'s in $A$ is at most
\[\max\{e_i+m-i:1\le i\le m\}.\]
\end{lemma}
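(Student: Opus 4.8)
The plan is to generalize the diagonal-counting argument used just before Lemma~\ref{lem:12avoiding} and combine it with the zigzag structure from Lemma~\ref{lem:12avoiding}. The key observation is that a staircase matrix $A(e_1,e_2,\ldots,e_m)$ can be embedded inside a full $m\times n$ array, so the left-to-right (Toeplitz) diagonals still make sense. If $A$ is $12$-avoiding, each such diagonal carries at most one~$1$. The subtlety is that the removed staircase of positions in the upper right means certain diagonals are shortened or entirely deleted, so we need to count precisely which diagonals survive and how the $12$-avoiding condition interacts with the staircase boundary.

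First I would set up coordinates: index the Toeplitz diagonals of the ambient $m\times n$ array so that diagonal $d$ passes through entry $(i,j)$ exactly when $j-i=d$, with $d$ ranging over $-(m-1)\le d\le n-1$. Removing $e_i$ left-justified positions in row~$i$ deletes the entries $(i,j)$ with $1\le j\le e_i$, i.e. the portion of diagonals $d=j-i$ with $1-i\le d\le e_i-i$. The core claim to establish is that among the $m+n-1$ diagonals, the number that still contain at least one admissible (non-removed) cell, together with the extra constraint that within the staircase region the $12$-avoiding condition forces the surviving $1$'s into a zigzag, yields the bound $\max\{e_i+m-i:1\le i\le m\}$. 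Concretely, I would argue that the $1$'s of $A$ can be listed along a single R-L zigzag path as in Lemma~\ref{lem:12avoiding} (no two $1$'s on a common row/column/diagonal can both be ``free'', so they must chain by immediate-left/immediate-below steps), and such a path, constrained to avoid the removed upper-right staircase, has length at most the stated maximum. The index $i$ achieving the maximum corresponds to the row at which the zigzag path crosses from the ``below the staircase'' region out to the right edge.

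The cleanest route is probably induction on $m$ (or on $\sum e_i$), mirroring the ``simple induction'' invoked in Lemma~\ref{lem:12avoiding}. Peel off the first row: it contains at most one $1$, and if $a_{1j}=1$ then the $12$-avoiding condition kills every $1$ strictly below and strictly to the right of $(1,j)$, so all remaining $1$'s lie either in the first $j-1$ columns of rows $2,\ldots,m$ or in column~$j$. Deleting row~$1$ (and possibly column~$j$) leaves a smaller $(0,1)$-staircase matrix with parameters $e_2,\ldots,e_m$ (suitably truncated), to which the inductive hypothesis applies; tracking how the index shift $m-i\mapsto (m-1)-(i-1)$ interacts with the truncated $e_i$'s gives exactly the claimed bound. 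One must check the base case $m=1$ (where $A$ has $n-e_1$ cells, all potentially~$1$, but $12$-avoiding is vacuous in one row, so the bound $e_1+m-1=e_1$... ) — actually the base case needs care: a single row of $n-e_1$ ones has $n-e_1$ ones, which should be $\le \max\{e_1+0\}=e_1$ only if $n\le 2e_1$; so the statement must implicitly assume $n$ is not too large, or the maximum is over a set that I am mis-indexing.

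The main obstacle I anticipate is exactly this: pinning down the correct interplay between the diagonal count (which is ``global'', giving $m+n-1$) and the staircase truncation, and verifying that the max-expression $\max\{e_i+m-i\}$ is genuinely the right clean form rather than something like $\max\{e_i+m-i\}$ taken together with the plain bound $m+n-1$. I would resolve this by carefully re-deriving the $k=2$ zigzag picture (Lemma~\ref{lem:12avoiding}) in the presence of a removed Ferrers-type region in the northeast corner, checking on the $m=5,n=6$ example $A(3,4,5,5,6)$ that the formula gives $\max\{3+4,4+3,5+2,5+1,6+0\}=7$, and confirming a $12$-avoiding filling with $7$ ones exists while $8$ does not. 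Once the extremal zigzag configuration is identified, the upper bound follows from the observation that each ``southward'' step of the zigzag stays within one column and each ``westward'' step within one row, so the path length is controlled by the row at which it exits past the staircase, which is precisely some $e_i+(m-i)$.
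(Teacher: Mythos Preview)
Your opening instinct—the diagonal-counting argument—is exactly right and is essentially all the paper uses. But first a definitional point that is tripping you up: despite the paper's wording (``with $e_i$ left-justified positions in row $i$ removed''), the two displayed examples make clear that $e_i$ is the number of left-justified cells \emph{kept} in row~$i$; it is the $n-e_i$ cells on the right that are deleted. With that reading your sanity check on $A(3,4,5,5,6)$ is consistent, and the base-case anomaly you flag (a single row with $n-e_1$ ones versus a bound of $e_1$) disappears.

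Given the correct reading, the paper's proof is a two-line diagonal count dressed in zigzag language. Since $A$ is $12$-avoiding, each Toeplitz diagonal carries at most one $1$; and every diagonal index $d=j-i$ that occurs in the staircase satisfies $1-m\le d\le e_i-i$ for some $i$, hence lies in the interval $[\,1-m,\ \max_i(e_i-i)\,]$ of size $\max_i(e_i+m-i)$. That is the entire argument—no induction, no peeling. The paper phrases it as ``starting from the rightmost position of row $i$ \ldots\ the maximum number of $1$'s possible is $e_i+(m-i)$,'' which is this same count read off along the R-L zigzag path from $(i,e_i)$ down to $(m,1)$.

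Your proposed induction, by contrast, contains a genuine error: you write ``peel off the first row: it contains at most one~$1$.'' This is false—a $12$-avoiding matrix may have an entire row of $1$'s, since two $1$'s sharing a row never form a $12$-pattern. The inductive step as written therefore does not go through. One could salvage an induction by instead removing the topmost-rightmost $1$ together with everything to its northeast, but that is strictly more work than the one-paragraph diagonal bound you already identified in your first sentence.
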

\begin{proof} Such a collection of 1's has to form a collection of zigzag paths. Starting from the  rightmost position of row  $i$ of the staircase matrix,
the maximum number of 1's possible is $e_i+(m-i)$. Hence the lemma follows. \end{proof} 

\begin{example}\label{ex:newex}{\rm 
Consider the staircase array
\[
\left[\begin{array}{c|c|c|c|c|c}
\phantom{X}&\cellcolor[gray]{0.8}\phantom{X}&\cellcolor[gray]{0.8}\phantom{X}&\cellcolor[gray]{0.8}\phantom{X}&\cellcolor[gray]{0.8}\phantom{X}&\cellcolor[gray]{0.8}\phantom{X}\\ \hline
&&&&\cellcolor[gray]{0.8}&\cellcolor[gray]{0.8}\\ \hline
&&&&&\cellcolor[gray]{0.8}\\ \hline
&&&&&\cellcolor[gray]{0.8}\\ \hline
&&&&&\cellcolor[gray]{0.8}\\ \hline
&&&&&\cellcolor[gray]{0.8}\end{array}\right]
\rightarrow \left[\begin{array}{c|c|c|c|c|c}
\phantom{X}&\cellcolor[gray]{0.8}\phantom{X}&\cellcolor[gray]{0.8}\phantom{X}&\cellcolor[gray]{0.8}\phantom{X}&\cellcolor[gray]{0.8}\phantom{X}&\cellcolor[gray]{0.8}\phantom{X}\\ \hline
&&&&\cellcolor[gray]{0.8}&\cellcolor[gray]{0.8}\\ \hline
1&1&1&1&1&\cellcolor[gray]{0.8}\\ \hline
1&&&&&\cellcolor[gray]{0.8}\\ \hline
1&&&&&\cellcolor[gray]{0.8}\\ \hline
1&&&&&\cellcolor[gray]{0.8}\end{array}\right].
\]
Then $(e_1,e_2,e_3,e_4,e_f,e_6)=(1,4,5,5,5,5)$ and the maximum in Lemma \ref{lem:12avoiding2} is  8 and it occurs with $i=3$ giving the RL-zigzag pattern above.
}\hfill{$\Box$}
\end{example}

We now consider  the general case of $12\cdots k$-avoiding $m\times n$ $(0,1)$-matrices $A=[a_{ij}]$. First we extend the definition of a complete R-L zigzag path to an {\it R-L zigzag path} where the beginning may be at any entry $a_{pq}$ and the end may be any entry $a_{uv}$ with $u\le p$ and $v\le q$ (this is equivalent to a complete zigzag path in a $(p-u+1)\times (q-v+1)$ submatrix of $A$. In a similar way we have an {\it L-R zigzag path}. 
We illustrate this in the next example.

\begin{example}\label{ex:twozigzags}{\rm 
Let $m=n=6$ and $k=3$. Then the following $(0,1)$-matrices are 123-avoiding, each containing twenty 1's where the 1's are partitioned into a R-L zigzag path of $11$\  $1$'s and a complete R-L zigzag path of $9$\ 1's. 
\[\left[\begin{array}{c|c|c|c|c|c}
 0& 0& 0&\cellcolor{gray!80}1&\cellcolor{gray!80}1&\cellcolor{gray!80}1\\ \hline
 0& 0&\cellcolor{gray!80}1&\cellcolor{gray!80}1&\cellcolor{gray!40} 1&\cellcolor{gray!40} 1\\ \hline
 0& 0&\cellcolor{gray!80}1&\cellcolor{gray!40} 1&\cellcolor{gray!40} 1&0\\ \hline
 0&\cellcolor{gray!80}1&\cellcolor{gray!80}1&\cellcolor{gray!40} 1&0&0\\ \hline
\cellcolor{gray!80}1&\cellcolor{gray!80}1&\cellcolor{gray!40} 1&\cellcolor{gray!40} 1&0&0\\ \hline
\cellcolor{gray!80}1&\cellcolor{gray!40} 1&\cellcolor{gray!40} 1&0&0&0\end{array}\right],
\left[\begin{array}{c|c|c|c|c|c}
\cellcolor{gray!80}1&
\cellcolor{gray!80}1&\cellcolor{gray!80}1&
\cellcolor{gray!80}1&\cellcolor{gray!80}1&\cellcolor{gray!80}1\\ \hline
\cellcolor{gray!80}1&0& 0& 0&\cellcolor{gray!40} 1&\cellcolor{gray!40} 1\\ \hline
\cellcolor{gray!80}1& 0& 0&\cellcolor{gray!40} 1&\cellcolor{gray!40} 1&0\\ \hline
\cellcolor{gray!80}1& 0& \cellcolor{gray!40}  1&\cellcolor{gray!40} 1&0&0\\ \hline
\cellcolor{gray!80}1& 0&\cellcolor{gray!40} 1&0&0&0\\ \hline
\cellcolor{gray!80}1&\cellcolor{gray!40} 1&\cellcolor{gray!40} 1&0&0&0\end{array}\right],
\left[\begin{array}{c|c|c|c|c|c}
0&0&\cellcolor{gray!80}1&\cellcolor{gray!80}1&\cellcolor{gray!80}1&\cellcolor{gray!80}1\\ \hline
0&\cellcolor{gray!80}1&\cellcolor{gray!80}1&0&\cellcolor{gray!30}1&\cellcolor{gray!30}1\\ \hline

0&\cellcolor{gray!80}1&0&0&\cellcolor{gray!30}1&0\\ \hline

\cellcolor{gray!80}1&\cellcolor{gray!80}1&0&\cellcolor{gray!30}1&\cellcolor{gray!30}1&0\\ \hline
\cellcolor{gray!80}1&0&0&\cellcolor{gray!30}1&0&0\\ \hline
\cellcolor{gray!80}1&\cellcolor{gray!30}1&\cellcolor{gray!30}1&\cellcolor{gray!30}1&0&0\end{array}\right].\]
}\hfill{$\Box$}\end{example}

 \begin{lemma}\label{lem:kmax}
Let $m,n,k$ be positive integers with $m,n\ge k\ge 3$.  Then the maximum number of $1$'s in a $12\cdots k$-avoiding $m\times n$ $(0,1)$-matrix $A$ is $(k-1)(m+n-(k-1))$. 
 \end{lemma}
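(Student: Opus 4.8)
The plan is to prove the upper bound and matching construction separately. For the upper bound, I would use a diagonal (Toeplitz) decomposition argument analogous to the $k=2$ case. The matrix $A$ has $m+n-1$ left-to-right diagonals. The key observation is that on each diagonal, a $12\cdots k$-avoiding matrix can contain at most $k-1$ ones: indeed, $k$ ones lying on a single diagonal, read from upper-left to lower-right, have strictly increasing row indices and strictly increasing column indices, hence form a $12\cdots k$ pattern. Thus naively $A$ has at most $(k-1)(m+n-1)$ ones, but this overcounts; the refinement is that the $2(k-2)$ short diagonals near the two corners (those meeting fewer than $k-1$ cells, i.e. the diagonals starting at $a_{1,n}, a_{1,n-1}, \ldots$ and $a_{m,1}, a_{m-1,1}, \ldots$ within the corner triangles) cannot each carry $k-1$ ones. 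Summing the true capacities diagonal-by-diagonal — $\min(\ell, k-1)$ where $\ell$ is the length of the diagonal — gives exactly $(k-1)(m+n-1) - 2\binom{k-1}{2} = (k-1)(m+n-(k-1))$, after simplifying $2\binom{k-1}{2} = (k-1)(k-2)$ and $(k-1)(m+n-1) - (k-1)(k-2) = (k-1)(m+n-(k-1))$.

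For the lower bound I would exhibit an explicit construction achieving $(k-1)(m+n-(k-1))$ ones. The natural candidate, suggested by Example~\ref{ex:twozigzags}, is to stack $k-1$ nested R-L zigzag paths: place $1$'s on a full anti-staircase so that the $j$-th ``layer'' ($1 \le j \le k-1$) is a complete R-L zigzag path inside the submatrix obtained by deleting the first $j-1$ and last $j-1$ rows/columns appropriately — more precisely, the union of $k-1$ diagonals' worth of zigzag structure, arranged so that every diagonal of $A$ meets the pattern in exactly $\min(\ell, k-1)$ cells. One clean way: let $A$ have a $1$ in position $(i,j)$ precisely when $i+j$ lies in a suitable band and a parity/zigzag condition holds; alternatively, take $k-1$ of the reverse-Grassmannian permutation matrices from the decomposition of $J_n$ in~(\ref{eq:J4}) type constructions and overlay them, checking the overlay is $12\cdots k$-avoiding. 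To verify $12\cdots k$-avoidance of the construction, I would argue that any increasing sequence of $1$'s (strictly increasing in both coordinates) must pass through $k$ distinct diagonals in increasing order of $i+j$, but the band has width only $k-1$, so no such length-$k$ sequence exists. Counting the $1$'s in the band gives the claimed total.

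The main obstacle I expect is the bookkeeping in the upper-bound overcount correction and, more delicately, in pinning down a construction whose $1$'s simultaneously (a) hit every long diagonal in exactly $k-1$ cells, (b) form valid zigzag paths so that avoidance is transparent, and (c) are easy to count. The parity/banded description must be chosen carefully near the two corners so that the short diagonals are used to full (reduced) capacity; a sloppy choice loses a few $1$'s near the corners and fails to meet the bound. I would handle this by making the construction a disjoint union of $k-1$ R-L zigzag paths of carefully chosen lengths — the outermost a complete R-L zigzag path of $m+n-1$ ones, and each successive one shorter by $2$ at each end — and then sum $\sum_{j=0}^{k-2}\big((m+n-1) - 2j\big) = (k-1)(m+n-1) - 2\binom{k-1}{2} = (k-1)(m+n-(k-1))$, which matches the upper bound and completes the proof.
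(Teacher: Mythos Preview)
Your approach is essentially the paper's: the upper bound by summing $\min(\ell,k-1)$ over the $m+n-1$ Toeplitz diagonals is exactly the paper's argument, and your final construction via $k-1$ nested R--L zigzag paths of lengths $m+n-1,\,m+n-3,\ldots,m+n-(2k-3)$ is the construction the paper sketches.

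One caution on the construction side: your ``band in $i+j$'' verification does \emph{not} apply to the nested-zigzag matrix --- its $1$'s span far more than $k-1$ anti-diagonals (already in Example~\ref{ex:twozigzags} with $k=3$ the values of $i+j$ range over five, not two, values). Conversely, a solid anti-diagonal band of width $k-1$ is indeed $12\cdots k$-avoiding by your argument but contains at most $(k-1)\min(m,n)$ ones, strictly fewer than $(k-1)(m+n-(k-1))$. So the band idea gives easy avoidance but the wrong count, while the zigzag stack gives the right count but still needs an avoidance check. The paper, like you, leaves that check as ``straightforward'' with an example; just be aware that the band argument is a red herring for the extremal construction and you should not lean on it there.
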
     		
   
     		\begin{proof}  		
      		 There are  two diagonals of $A$ of each length $1,2,\ldots,(k-2)$ and $(m+n-2k+3)$ diagonals of length at least $(k-1)$. Thus the number of 1's in $A$ is at  most
      		\[2(1+2+\cdots+(k-2)) +(k-1)(m+n-2k+3)=(k-1)(m+n-(k-1)).\]
      		It is straightforward to construct an $m\times n$ $(0,1)$-matrix which is $12\cdots k$-avoiding and contains this many $1$'s. One can simply mimic the construction of the first matrix  in Example \ref{ex:twozigzags}. For instance, if $m=6$, $n=8$ and $k=4$, the following matrix is $1234$-avoiding with $33$\ 1's:
      		\[
      		\left[\begin{array}{c|c|c|c|c|c|c|c}
  &&&&&1&1&1\\ \hline   
   &&&&1&1&1&1\\ \hline   
    &1&1&1&1&1&1&1\\ \hline   
     1&1&1&1&1&1&1&1\\ \hline   
      1&1&1&1&1&1&1&\\ \hline   
       1&1&1&1&&&&\end{array}\right].\] 		
     \end{proof}
     
     We now prove the main theorem of this section.

      \begin{theorem}\label{th:no123} Let $m, n,k$ be integers with $m,n\ge k\ge 3$.
      
      \begin{itemize}
      \item[\rm (i)] The maximum number of $1$'s in an $m\times n$ 
$(0,1)$-matrix which is  $12\cdots k$-avoiding  is \\ $(k-1)(m+n-(k-1))$. 
\item[\rm (ii)] Every
 $12\cdots k$-avoiding $m\times n$ $(0,1)$-matrix with $(k-1)(m+n-(k-1))$\ $1$'s can be decomposed into $(k-1)$ R-L zigzag paths of lengths
 $m+n-1, m+n-3, m+n-5,\ldots, m+n-(2k-3)$, respectively. 
 \item[\rm (iii)] If $A$ is a 
$12\cdots k$-avoiding $m\times n$ 
$(0,1)$-matrix containing fewer than $(k-1)(m+n-(k-1))$\ $1$'s, then there is a $0$ in $A$ that can be replaced with a $1$ so that the resulting matrix remains $12\cdots k$-avoiding.
\end{itemize}
      \end{theorem}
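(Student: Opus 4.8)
The plan is to prove the three parts together by induction and by exploiting the diagonal structure already used in Lemma \ref{lem:kmax}. Part (i) is exactly Lemma \ref{lem:kmax}, so nothing new is needed there; the real work is in (ii) and (iii), and I would prove (iii) first since the extremal characterization in (ii) will follow from a clean understanding of when a $0$ cannot be promoted.

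For part (iii), the key observation is the diagonal bound from the proof of Lemma \ref{lem:kmax}: each left-to-right (Toeplitz) diagonal of $A$ of length $\ell$ can contain at most $\min\{\ell,k-1\}$ ones, since $k$ ones on a common diagonal already form the pattern $12\cdots k$ (indeed they sit on the main diagonal of the $k\times k$ submatrix they determine). Call a diagonal \emph{saturated} if it attains this bound. If $A$ has fewer than $(k-1)(m+n-(k-1))$ ones, then some diagonal $D$ of length at least $k-1$ is \emph{not} saturated. I would then argue that one of the (at most $k-1$, hence at least one) empty positions on $D$ can be filled: pick the empty position $a_{pq}$ on $D$, and suppose for contradiction that filling it creates a copy of $12\cdots k$. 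That copy uses $a_{pq}$ together with $k-1$ other ones of $A$ forming a $12\cdots k$ pattern after the new entry is added; the key point is that among those $k-1$ ones, the ones occurring \emph{before} $a_{pq}$ in the pattern lie strictly northwest of $(p,q)$ and the ones after lie strictly southeast, so they all lie on diagonals \emph{other} than $D$. The obstruction position on $D$ is therefore not forced by any single bad pattern through that particular slot — but of course it may be forced for \emph{each} empty slot on $D$ by a \emph{different} bad pattern, and ruling this out simultaneously is the main obstacle (see below). The cleanest route around it is to choose $D$ more carefully: among all unsaturated diagonals of length $\ge k-1$, take one, say $D$, that is \emph{extreme} — e.g. closest to the top-right corner — so that every diagonal strictly "above/right" of it is saturated. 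Then any bad pattern through an empty slot $a_{pq}$ of $D$ must use a one northwest of it, and by the extremality that one lies on a \emph{saturated} diagonal; chasing the saturation of those diagonals and counting ones along them should produce a contradiction with $D$ itself being unsaturated. I expect this extremality-plus-counting argument to be the crux.

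For part (ii), suppose $A$ is $12\cdots k$-avoiding with the maximum $(k-1)(m+n-(k-1))$ ones. Then by (i) every diagonal is saturated: the two diagonals of each length $1,2,\ldots,k-2$ are full, and each of the $m+n-2k+3$ diagonals of length $\ge k-1$ carries exactly $k-1$ ones. I would extract the $(k-1)$ zigzag paths greedily, peeling off an R-L zigzag path of length $m+n-1$ first. Concretely, $a_{1n}=a_{m1}=1$ (these lie on length-$1$ diagonals, which are full), and I would build a path from $a_{1n}$ to $a_{m1}$ that at each step moves left or down to the next one of $A$; saturation of the intermediate diagonals guarantees that such a move is always available and that the path hits each diagonal exactly once, giving a path of length $m+n-1$ using one 1 from every diagonal. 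Removing this path leaves an $m\times n$ array whose remaining ones, on each diagonal of length $\ge k-1$, number exactly $k-2$, and which is still $1\,2\cdots(k-1)$-avoiding (a copy of $12\cdots(k-1)$ in the remainder plus a suitable entry of the removed path would give $12\cdots k$ in $A$ — this needs the path to be "diagonally monotone", which the zigzag construction ensures). Now induct on $k$: the remainder, after discarding its outer all-zero frame, is an extremal $1\,2\cdots(k-1)$-avoiding matrix and decomposes into $(k-2)$ R-L zigzag paths of lengths $m+n-3,\ldots,m+n-(2k-3)$. Splicing these back gives the claimed decomposition. The delicate point is verifying that removing a maximal zigzag path from an extremal $12\cdots k$-avoiding matrix yields an extremal $12\cdots(k-1)$-avoiding matrix (on the appropriate smaller frame) rather than merely a $12\cdots k$-avoiding one; I would handle this by showing the first path can always be chosen to run along the "outermost" ones, so that the shortened diagonals of the remainder still have length $\ge k-2$ and are exactly saturated for the pattern $12\cdots(k-1)$.

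Finally, I would note that (iii) also follows immediately once (i) and (ii) are in hand in the contrapositive direction, but proving (iii) directly by the extremality argument is cleaner and is what makes the greedy algorithm of the section work; so the logical order I would actually write is (i) $=$ Lemma \ref{lem:kmax}, then (iii), then (ii) using (iii) together with the diagonal-saturation bookkeeping and induction on $k$.
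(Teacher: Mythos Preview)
Your plan for (i) is fine, and your overall architecture for (ii) --- peel off an outermost complete R-L zigzag path, delete row $1$ and column $1$, and induct on $k$ --- is essentially the paper's. But both of the substantive arguments you sketch have real gaps, and the paper fills them by quite different means.

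\medskip
\textbf{Part (iii).} The extremality idea does not work as written. You want to take the unsaturated diagonal $D$ nearest the upper-right corner and argue that any forbidden $12\cdots k$ created by filling an empty $(p,q)\in D$ must use a $1$ on one of the saturated diagonals ``above/right'' of $D$. But a $1$ participating in such a pattern and lying northwest of $(p,q)$ has row $<p$ and column $<q$, and its diagonal index $j-i$ can be larger \emph{or} smaller than $q-p$; nothing pins it to the saturated side. (And if $(p,q)$ plays the role of the ``$1$'' in $12\cdots k$, the other entries are all southeast, where again you have no control.) So the sentence ``chasing the saturation of those diagonals \ldots should produce a contradiction'' is where the proof is missing, not a routine detail. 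The paper proceeds entirely differently: it follows the leftmost $1$'s row by row, going as far left, then as far down, as possible along $1$'s; if this gets stuck at $a_{r,l}=1$ with $a_{r,l-1}=a_{r+1,l}=0$, it first shows the row segment from $a_{r,l}$ back to the previous leftmost $1$ is solid $1$'s, and then observes that setting $a_{r+1,l}=1$ cannot create $12\cdots k$, since any such pattern could use $a_{r,l}$ in place of $a_{r+1,l}$ and would already exist in $A$. No diagonal bookkeeping is used.

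\medskip
\textbf{Part (ii).} Your sentence ``saturation of the intermediate diagonals guarantees that such a move is always available'' is false. From a $1$ at $(p,q)$, the two candidate next cells $(p,q-1)$ and $(p+1,q)$ lie on the \emph{same} diagonal (index $q-p-1$); knowing that diagonal carries $k-1$ ones somewhere says nothing about whether either of those two specific cells holds a $1$. The paper instead argues structurally from maximality: row $1$ must be a block of $0$'s followed by a block of $1$'s (a $0$ immediately right of a $1$ in row $1$ would leave its diagonal unsaturated or produce a $12\cdots k$), and if $a_{1p}$ is the first $1$ in row $1$ then $a_{2p}=1$ (else replacing it by $1$ keeps the matrix avoiding, contradicting maximality). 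Iterating row by row yields the complete R-L zigzag path; this is exactly the ``outermost'' path you correctly anticipated needing, and with it the deletion of row $1$ and column $1$ and the induction on $k$ go through.
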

 
   \begin{proof}   The first assertion (i) of the theorem is Lemma \ref{lem:kmax}, and we now consider assertion (ii). 
Let $A$ be a  $12\cdots k$-avoiding $m\times n$ $(0,1)$-matrix with $(k-1)(m+n-(k-1))$\ $1$'s.
By maximality, we must have 1's in the $(1,n-1), (1,n), (2,n)$ positions. 
Moreover, row 1 consists of a string of $0$'s followed by a string  of $1$'s. To see this, suppose that $a_{1,p-1}=1$ and $a_{1p}=0$.  Then the diagonal  determined by $a_{1p}$ either contains fewer  1's then a maximal matrix contains or it contains $(k-1)$ $1$'s,  contradicting the assumption that $A$ is $12\cdots k$-avoiding. Thus there is a $p\ge n-1$ such that $a_{11}=\cdots= a_{1,p-1}=0$ and $a_{1p}=\cdots=a_{1n}=1$. If $a_{2p}=0$ then, because $a_{1p}=1$,  maximality again is contradicted since we could change $a_{2p}$ to 1; hence $a_{2p}=1$. We now repeat the above argument with $a_{2p}$ replacing $a_{1p}$, and inductively construct a a complete R-L zigzag path connecting $a_{1n}$ and $a_{m1}$. This zigzag path contains a 1 from each diagonal of $A$. Let $A'$ be the matrix obtained from $A$ by replacing each 1 on this zigzag path with a $0$ and deleting row 1 and column 1.  Then $A'$ is a  $12\cdots(k-1)$-avoiding  $(m-1)\times (n-1)$ $(0,1)$-matrix and by induction can  be decomposed into $(k-2)$ zigzag paths of lengths $m+n-3,\cdots,m+n-(2k-3)$ zigzag paths. Thus assertion (ii) holds.

We now  prove assertion (iii) of the theorem. Let $A=[a_{ij}]$  be a $12\cdots k$-avoiding $m\times n$ matrix with fewer than  $(k-1)(m+n-(k-1))$\ $1$'s. Thus one of the diagonals of $A$ does not contain the maximum allowable 1's in a $12\cdots k$-avoiding $(0,1)$-matrix. Consider the leftmost 1's in each row of $A$. If these leftmost 1's form a  complete R-L zigzag path connecting $a_{1n}$ with $a_{m1}$ we replace these 1's with 0's and delete row 1 and column 1, resulting in 
an $12\ldots (k-1)$-avoiding $(m-1)\times (n-1)$ $(0,1)$-matrix  with the maximum number of 1's, and then use induction.  Now suppose that these leftmost 1's do not form such a zigzag path. Starting from $a_{1n}=1$ we follow a zigzag path, alternating between going as far left as possible then as far down  as possible; so we reach a $a_{kl}=1$ such that both $a_{k,l-1}=0$ and  $a_{k+1,l}=0$.
Suppose that $a_{kp}=1$ for some $p$ with $1\le p\le l-2$. Then there cannot exist a $12\cdots (k-1)$-pattern below and to the right of $a_{k,l-1}$ since we would then have a $12\cdots k$-pattern. Hence  $a_{k,p+1}=\cdots=a_{kl}=1$ and we can continue our zigzag path. Now replacing $a_{k+1,l}=0$ with a 1 cannot create a $12\cdots k$-pattern for if it did, $a_{kl}=1$ would have been part of a $12\cdots k$-pattern. Thus we can increase the number of 1's in $A$ if $A$ has fewer  than $(k-1)(m+n-(k-1))$\ $1$'s, and this proves (iii).
\end{proof}
       
      An $m\times n$ $(0,1)$-matrix $A$ is  {\it maximal $12\cdots k$-avoiding} provided $A$ is $12\cdots k$-avoiding and every $(0,1)$-matrix obtained from $A$ by replacing a 0 with a 1 is not $12\cdots k$-avoiding, that is, by Theorem \ref{th:no123}, $A$ has fewer than $(k-1)(m+n-(k-1)$\ $1$'s.  By (iii) of Theorem \ref{th:no123}, all maximal matrices contain the same number of 1's; in the context of graph theory, this can be viewed as a `saturation' result. It follows from Theorem \ref{th:no123}  that the following `greedy' algorithm
      can always be carried out  to produce a $12\cdots k$-avoiding matrix with the maximum number $(k-1)(m+n-(k-1)$ of  1's:
      
      \bigskip
      \centerline{\bf Algorithm for a Maximal $12\cdots k$-avoiding Matrix}
      \begin{itemize}
      \item[\rm (i)] Let $A$ be the $m\times n$ zero matrix.
      \item[\rm (ii)] While $A$ has less than $(k-1)(m+n-(k-1)$\ 1's, locate any 0 of $A$ whose replacement with a 1 results in a $12\cdots k$-avoiding matrix.
      \item[\rm (iii)] Output $A$.
      \end{itemize}
      
      To complete this section we make the following observation.
      It follows from Theorem \ref{th:no123} that a maximal $12\cdots k$-avoiding $m\times n$ $(0,1)$-matrix can be decomposed into $(k-1)$ R-L zigzag paths of lengths
 $m+n-1, m+n-3, m+n-5,\ldots, m+n-(2k-3)$. However, one cannot arbitrarily choose such R-L zigzag paths of these lengths and be assured of obtaining  a maximal $12\cdots k$-avoiding $(0,1)$-matrix.  This is illustrated in the next example.
 
  \begin{example}{\rm \label{ex:notarbzigzag}	Let $m=n=8$ and $k=5$, and consider   12345-avoiding $8\times 8$ $(0,1)$-matrices. A maximal such matrix can be decomposed into R-L zigzag paths of lengths $15, 13, 11,\mbox{ and }9$,  respectively.
 Suppose we attempt to construct such a matrix by  choosing R-L zigzag paths of lengths 
      	$15$, $13$, and $11$ and  obtain a 12345-avoiding $(0,1)$-matrix. Below we indicate a choice of such paths by $a$, $b$, and $c$, respectively:      
  \[\left[\begin{array}{c|c|c|c|c|c|c|c}
  &&&&b&b&b&a\\ \hline    
  &&&&b&&a&a\\ \hline    
  &&b&b&b&a&a&c\\ \hline    
  &&b&&a&a&&c\\ \hline    
  b&b&b&a&a&&c&c\\ \hline    
  b&&a&a&&c&c&\\ \hline    
  b&a&a&&c&c&&\\ \hline    
  a&a&c&c&c&&&\end{array}\right].\]
As is easily checked,    there is not any R-L zigzag path of length 9 that can now be inserted. But we can put in 9\ 1's, indicate by $d$ below, so that the result is still 12345-avoiding, and then we can decompose the resulting matrix  into zigzag paths of lengths $15,13,11,\mbox{ and }9$ indicated by  $x,y,z,\mbox{ and }u$:
  \[\left[\begin{array}{c|c|c|c|c|c|c|c}
  d&d&d&d&b&b&b&a\\ \hline    
  d&&&&b&d&a&a\\ \hline    
  d&&b&b&b&a&a&c\\ \hline    
  d&&b&&a&a&&c\\ \hline    
  b&b&b&a&a&&c&c\\ \hline    
  b&d&a&a&&c&c&\\ \hline    
  b&a&a&&c&c&&\\ \hline    
  a&a&c&c&c&&&\end{array}\right]\rightarrow
  \left[\begin{array}{c|c|c|c|c|c|c|c}
  x&x&x&x&x&x&x&x\\ \hline
  x&&&&y&y&y&y\\ \hline
  x&&y&y&y&z&z&z\\ \hline
  x&&y&&z&z&&u\\ \hline
  x&y&y&z&z&&u&u\\ \hline
  x&y&z&z&&u&u&\\ \hline
  x&y&z&&u&u&&\\ \hline
  x&y&z&u&u&&&\end{array}\right].\]
  
  Moreover, if one has a maximal $12\cdots k$-avoiding $m\times n$ $(0,1)$-matrix which,  by Theorem \ref{th:no123}, can be decomposed into R-L zigzag paths of lengths $m+n-1, m+n-3, m+n-5,\ldots, m+n-(2k-3)$,), one cannot choose these paths arbitrarily. For instance,
  let $m=n=6$ and $k=3$. and  consider the maximal 123-avoiding matrix with $11+9=20$\ $1$'s below. If we choose the R-L zigzag path of length 11 indicated by the shaded 1's, then there is not a R-L zigzag path of length 9 that can be formed from the remaining 1's.
  \[\left[\begin{array}{c|c|c|c|c|c}
  &&&1&1&1\\ \hline
  &&1&1&1&1\\ \hline
  &1&1&1&1&\\ \hline
  1&1&1&1&&\\ \hline
  1&1&1&&&\\ \hline
 1&1&&&&\end{array}\right]\rightarrow
 \left[\begin{array}{c|c|c|c|c|c}
  &&&1&\cellcolor{gray!20} 1&\cellcolor{gray!20} 1\\ \hline
  &&1&\cellcolor{gray!20} 1&\cellcolor{gray!20} 1&1\\ \hline
  &1&\cellcolor{gray!20} 1&\cellcolor{gray!20}1&1&\\ \hline
  1&\cellcolor{gray!20}1&\cellcolor{gray!20} 1&1&&\\ \hline
  \cellcolor{gray!20} 1&\cellcolor{gray!20} 1&1&&&\\ \hline
 \cellcolor{gray!20} 1&1&&&&\end{array}\right].\]
  }\hfill{$\Box$}
\end{example}

\section{$312$-avoiding $(0,1)$-Matrices}

An $m\times n$ $(0,1)$-matrix $A$ is {\it maximal $312$-avoiding} provided it is $312$-avoiding  and every matrix obtained from $A$ by replacing a 0 with a 1 is not $312$-avoiding. 
As for the case of $12\cdots k$-avoiding we shall see that all maximal $312$-avoiding $m\times n$ $(0,1)$-matrices have the same number of 1's (and indeed the same number as maximal $123$-avoiding matrices). We also give an algorithm which, carried out in all possible ways, produces all maximal $312$-avoiding $(0,1)$-matrices.

 
  There are three (and only three) $m\times n$ $(0,1)$-matrices which are 312-avoiding and which have an $(m-2)\times (n-2)$ zero submatrix. This zero matrix can be in the upper right corner (the `3' position), the middle left side (the `1' position), and the lower middle (the `2' position). We illustrate these for $m=n=5$ in the next example.
  
   \begin{example}{\rm  \label{ex:three}  Three $312$-avoiding $5\times 5$ $(0,1)$ matrices with a $3\times 3$ zero submatrix:
  \[
  \left[\begin{array}{c|c|c|c|c}
  1& 1&0&0&0\\ \hline
  1 &1&0&0&0\\ \hline
    1&1&0&0&0\\ \hline
     1&1&1&1&1\\ \hline
      1&1&1&1&1\end{array}\right], \ 
      \left[\begin{array}{c|c|c|c|c}
 1 &1&1&1&1\\ \hline
   0&0&0&1&1\\ \hline
    0&0&0&1&1\\ \hline
     0&0&0&1&1\\ \hline
      1&1&1&1&1\end{array}\right], \ 
      \left[\begin{array}{c|c|c|c|c}
  1&1&1&1& 1\\ \hline
   1&1&1&1&1\\ \hline
   1 &0&0&0&1\\ \hline
     1&0&0&0&1\\ \hline
     1 &0&0&0&1\end{array}\right].\]
       The matrices
\[
\left[\begin{array}{c|c|c|c|c}
1&1&1&1&0\\ \hline
0&1&1&1&0\\ \hline
1&1&0&1&0\\ \hline
1&0&0&1&1\\ \hline
1&0&0&1&1\end{array}\right]\mbox{ and }
\left[\begin{array}{c|c|c|c|c|c}
1&1&1&1&0&0\\ \hline
0&1&1&1&0&0\\ \hline
1&1&0&1&0&0\\ \hline
1&0&0&1&1&1\\ \hline
1&0&0&1&1&1\\ \hline
1&0&0&0&0&1\end{array}\right]\] 
are both $312$-avoiding.
     All five of these matrices are easily checked to be maximal $312$-avoiding matrices.
  }\hfill{$\Box$}\end{example}
  
 The position in the upper right corner of an $m\times n$  $(0,1)$-matrix $A$ is special for the property of being 312-avoiding. In fact, it is easy to see that if there is a 1 in the upper right corner, then  the matrix $A$ is  312-avoiding if and only if there does not exist  a 312-pattern using that 1, equivalently, if and only if the $(m-1)\times (n-1)$- submatrix $A(1,n)$ obtained from $A$ by deleting row 1 and
 column n is 12-avoiding. If, in addition, $A$ is a maximal 312-avoiding matrix, then the first row and last column of $A$ contain only 1's. Thus by Lemma \ref{lem:12avoiding}, $A(1,n)$ 
  contains exactly $((m-1)+(n-1)-1)=(m+n-3)$\ 1's and $A$ contains exactly $(m+n-1)+(m+n-3)=2(m+n-2)$\ 1's; in addition,  the 1's in $A(1,n)$ form a complete L-R zigzag path.

\begin{example}{\rm \label{ex:upper}
Let $m=8$ and $n=10$. Then an $8\times 10$\   312-avoiding 
$(0,1)$-matrix with $32$\ $1$'s as discussed above is  
\[\left[\begin{array}{c|c|c|c|c|c|c|c|c|c}
1&1&1&1&1&1&1&1&1&\mathbf 1\\ \hline
0&0&0&0&0&0&0&\cellcolor{gray!20}1&\cellcolor{gray!20} 1&1\\ \hline
0&0&0&0&0&0&\cellcolor{gray!20}1&\cellcolor{gray!20} 1&0&1\\ \hline
0&0&0&0&\cellcolor{gray!20} 1&\cellcolor{gray!20} 1&\cellcolor{gray!20} 1&0&0&1\\ \hline
0&0&0&\cellcolor{gray!20} 1&\cellcolor{gray!20} 1&0&0&0&0&1\\ \hline
0&0&\cellcolor{gray!20} 1&\cellcolor{gray!20} 1&0&0&0&0&0&1\\ \hline
0&0&\cellcolor{gray!20} 1&0&0&0&0&0&0&1\\ \hline
\cellcolor{gray!20} 1&\cellcolor{gray!20} 1&\cellcolor{gray!20} 1&0&0&0&0&0&0&1\end{array}\right],\] 
where the L-R zigzag path is given by the shaded 1's.

Three examples of maximal 312-avoiding matrices where the upper right is the same reverse Ferrers array of 0's   are given below; the differences are highlighted by shading for contrast, where the  more  darkly shaded  1's are those 1's that border the reverse Ferrers array. As we show in the lemma that follows, in a maximal 312-avoiding $(0,1)$-matrix there is always a reverse Ferrers array of 0's in the upper right corner bordered by a R-L zigzag path of 1's. Notice that there is a 1 in the lower left position, and this always holds in a maximal 312-avoiding matrix.}

\[\left[\begin{array}{c|c|c|c|c|c|c|c}
\cellcolor{gray!80}1&\cellcolor{gray!80} 1&0&0&0&0&0&0\\ \hline
1&\cellcolor{gray!80} 1&\cellcolor{gray!80} 1 &0&0&0&0&0\\ \hline
\cellcolor{gray!20}1&1&\cellcolor{gray!80} 1&\cellcolor{gray!80} 1 &0&0&0&0\\ \hline
\cellcolor{gray!20}1&&1&\cellcolor{gray!80} 1&\cellcolor{gray!80} 1 &0&0&0\\ \hline
\cellcolor{gray!20}1&&&1&\cellcolor{gray!80} 1&\cellcolor{gray!80} 1&0&0\\ \hline
\cellcolor{gray!20}1&&&&1&\cellcolor{gray!80} 1&\cellcolor{gray!80} 1&0\\ \hline
\cellcolor{gray!20}1&&&&&1&\cellcolor{gray!80} 1&\cellcolor{gray!80}1\\ \hline
1&&&&&&1&\cellcolor{gray!80} 1\end{array}\right],
\left[\begin{array}{c|c|c|c|c|c|c|c}
\cellcolor{gray!80} 1&\cellcolor{gray!80} 1&0&0&0&0&0&0\\ \hline
1&\cellcolor{gray!80} 1&\cellcolor{gray!80} 1 &0&0&0&0&0\\ \hline
&1&\cellcolor{gray!80} 1&\cellcolor{gray!80} 1 &0&0&0&0\\ \hline
&&1&\cellcolor{gray!80} 1&\cellcolor{gray!80} 1 &0&0&0\\ \hline
&&\cellcolor{gray!20}1 &1&\cellcolor{gray!80} 1&\cellcolor{gray!80} 1&0&0\\ \hline
\cellcolor{gray!20}1&\cellcolor{gray!20}1&\cellcolor{gray!20}1&&1&\cellcolor{gray!80} 1&\cellcolor{gray!80} 1&0\\ \hline
\cellcolor{gray!20}1&&&&&1&\cellcolor{gray!80} 1&\cellcolor{gray!80} 1\\ \hline
1&&&&&&1&\cellcolor{gray!80} 1\end{array}\right],
\left[\begin{array}{c|c|c|c|c|c|c|c}
\cellcolor{gray!80} 1&\cellcolor{gray!80} 1&0&0&0&0&0&0\\ \hline
1&\cellcolor{gray!80} 1&\cellcolor{gray!80} 1 &0&0&0&0&0\\ \hline
\cellcolor{gray!20}1&1&\cellcolor{gray!80} 1&\cellcolor{gray!80} 1 &0&0&0&0\\ \hline
&&1&\cellcolor{gray!80} 1&\cellcolor{gray!80} 1 &0&0&0\\ \hline
\cellcolor{gray!20}1&&\cellcolor{gray!20} 1&1&\cellcolor{gray!80} 1&\cellcolor{gray!80} 1&0&0\\ \hline
&&&&1&\cellcolor{gray!80} 1&\cellcolor{gray!80} 1&0\\ \hline
\cellcolor{gray!20}1&&&&\cellcolor{gray!20}1&1&\cellcolor{gray!80} 1&\cellcolor{gray!80} 1\\ \hline
1&&&&&&1&\cellcolor{gray!80} 1\end{array}\right].\]
\hfill{$\Box$}\end{example}

The next lemma works more generally for  $k12\cdots (k-1)$-avoiding matrices.

\begin{lemma}\label{lem:new2}
Let  $k\ge 3$ and let $A=[a_{ij}]$ be an $m\times n$ $(0,1)$-matrix which is maximal $k12\cdots(k-1)$-avoiding such that $A$ has a $0$ in its upper right corner $(a_{1n}=0)$. Then the  $0$'s in the upper-right hand corner form   a reverse Ferrers array bordered by a complete L-R zigzag path of $(m+n-1)$\ $1$'s.
\end{lemma}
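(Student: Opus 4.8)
The plan is to construct the zigzag path greedily, one row at a time from the top‑left corner: maximality of $A$ will force enough $1$'s onto the path, while $k12\cdots(k-1)$‑avoidance will force all the $0$'s above and to the right of it. Two elementary observations get used throughout. \emph{(O1)} If every entry of $A$ lying strictly above row $i$ and strictly to the right of column $j$ is $0$, then in any occurrence of $k12\cdots(k-1)$ that contains the entry $(i,j)$, that entry must be the leading ``$k$'': a non‑leading entry of the pattern has the leading entry strictly above it and strictly to its right, hence inside that all‑zero region. \emph{(O2)} If changing a $0$ at $(i,j)$ to a $1$ creates an occurrence of $k12\cdots(k-1)$ in which $(i,j)$ is the leading ``$k$'', then $A$ itself already contains a $12\cdots(k-1)$‑pattern (a submatrix $\ge I_{k-1}$) inside $A[\{i+1,\ldots,m\}\times\{1,\ldots,j-1\}]$, namely the other $k-1$ cells of the occurrence.

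First I would pin down row $1$. By maximality $a_{11}=1$, since flipping $(1,1)$ cannot create a pattern (by (O1) it could only be a ``$k$'', which would need a $12\cdots(k-1)$‑pattern in columns $\le 0$). Likewise no $0$ of row $1$ can have a $1$ of row $1$ to its right: flipping such a $0$ at $(1,c)$ could, by (O1), only make it a ``$k$'', hence by (O2) would require a $12\cdots(k-1)$‑pattern in $A[\{2,\ldots,m\}\times\{1,\ldots,c-1\}]$, but that pattern together with the existing $1$ of row $1$ to the right of column $c$ is already a $k12\cdots(k-1)$ in $A$. So row $1$ is a block of $1$'s in columns $1,\ldots,p_1$ followed by $0$'s, with $1\le p_1\le n-1$ (the trailing zeros exist because $a_{1n}=0$; this also forces $m,n\ge k$, since a maximal matrix with $m<k$ or $n<k$ must be all $1$'s). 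Playing $a_{1,p_1}=1$ against avoidance, $A[\{2,\ldots,m\}\times\{1,\ldots,p_1-1\}]$ contains no $12\cdots(k-1)$‑pattern.

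The core is an induction on the row index $i$, producing columns $1=p_0\le p_1\le\cdots\le p_i\le n$ such that for every $i'\le i$ one has $a_{i'j}=1$ for $p_{i'-1}\le j\le p_{i'}$ and $a_{i'j}=0$ for $j>p_{i'}$, and such that $A[\{i+1,\ldots,m\}\times\{1,\ldots,p_i-1\}]$ contains no $12\cdots(k-1)$‑pattern. For the step $i-1\to i$: \emph{(I)} the down step $a_{i,p_{i-1}}=1$ holds, since otherwise flipping $(i,p_{i-1})$ creates an occurrence in which, by (O1) (every earlier row is $0$ strictly to the right of column $p_{i-1}$, as $p_{i'}\le p_{i-1}$), the entry $(i,p_{i-1})$ is the leading ``$k$'', so by (O2) there is a $12\cdots(k-1)$‑pattern in $A[\{i+1,\ldots,m\}\times\{1,\ldots,p_{i-1}-1\}]$, contradicting the hypothesis for row $i-1$. \emph{(II)} Let $p_i$ be the right end of the maximal run of $1$'s in row $i$ beginning at column $p_{i-1}$, so $p_{i-1}\le p_i\le n$. \emph{(III)} No $1$ reappears in row $i$ past column $p_i$: if $a_{ib}=1$ with $b\ge p_i+2$, then flipping the $0$ at $(i,p_i+1)$ creates, by (O1) and (O2), a $12\cdots(k-1)$‑pattern in $A[\{i+1,\ldots,m\}\times\{1,\ldots,p_i\}]$, and that pattern together with $a_{ib}$ — which lies strictly above it and strictly to its right — is a $k12\cdots(k-1)$ in $A$, a contradiction. \emph{(IV)} A $12\cdots(k-1)$‑pattern inside $A[\{i+1,\ldots,m\}\times\{1,\ldots,p_i-1\}]$ would, with $a_{i,p_i}=1$, give a $k12\cdots(k-1)$ in $A$, so none exists; this is the ``no pattern'' clause for row $i$, closing the induction. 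Finally $p_m=n$ (otherwise flipping $(m,p_m+1)$ would, by (O1)/(O2), force a $12\cdots(k-1)$‑pattern in rows $>m$, impossible, whereas maximality says the flip must create a pattern). Hence the cells $(i,j)$ with $p_{i-1}\le j\le p_i$ form a complete L‑R zigzag path with $\sum_{i=1}^{m}(p_i-p_{i-1}+1)=(p_m-p_0)+m=m+n-1$ ones, and the $0$'s lying strictly to the right of it in each row — a staircase, since $p_1\le\cdots\le p_m$ — are precisely the reverse Ferrers array of the statement.

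I expect step \emph{(III)}, ruling out a $1$ that reappears after the trailing zeros of a row, to be the main obstacle, and the mechanism that overcomes it is (O2): maximality of the flipped cell forces a full $12\cdots(k-1)$‑pattern strictly below and to the left of it, and then a single $1$ further toward the upper‑right completes that pattern to a forbidden $k12\cdots(k-1)$. Accordingly, observation (O1) — that a ``high and far‑right'' $1$ can serve only as the leading entry of an occurrence — is the bookkeeping fact that licenses every step where a flipped cell ``can only be a $k$'', and it is worth isolating first.
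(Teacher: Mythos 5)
Your proof is correct and follows essentially the same route as the paper's: an inductive, row-by-row construction of the bordering zigzag path in which maximality forces the down-step $1$'s (your step (I)) and $k12\cdots(k-1)$-avoidance forces the trailing $0$'s in each row (your step (III)), exactly as in the paper's argument with the columns $p_1\le p_2\le\cdots$. The only difference is one of explicitness: your observations (O1)--(O2) and the general-row version of the ``no $0$ followed by a $1$'' argument are used implicitly in the paper (which spells them out only for row $1$ and then says ``continuing like this''), so your write-up supplies details the paper leaves to the reader.
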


\begin{proof} The maximality property implies that $a_{11}=a_{12}=\cdots=a_{1,k-1}=1$ and that $a_{m-k+2,n}=a_{m-k+3,n}=\cdots=a_{m,n}=1$, since their positions can never occur in a 
$k1\cdots(k-1)$-pattern. By maximality there cannot be a 0 followed by a 1 in row 1; since if replacing that 0 by a 1 creates a $k12\cdots(k-1)$-pattern, then that 1 itself would be part of a $k12\cdots(k-1)$-pattern. We will use both the maximality property  and the $k12\cdots(k-1)$-avoiding property without specifically referring to them now, as it should be clear when they are being invoked. Let $p_1$ be defined by $a_{11}=\cdots=a_{1,p_1}=1, a_{1,p_1+1}=a_{1,p_1+2}=\cdots=a_{1n}=0$.  
We then must have $a_{2,p_1}=1$.   There exists  $p_2> p_1$ such that 
$a_{2,p_1}=a_{2,p_1+1}=\cdots=a_{2p_2}=1$ and $a_{2,p_2+1}=\cdots=a_{2n}=0$. Starting from $p_2$, in a similar way we find $p_3\ge p_2$ such that
$a_{3,p_2}=a_{3,p_2+1}=\cdots=a_{3,p_3}=1$ and $a_{3,p_3+1}=\cdots=a_{3n}=0$. Continuing like this we obtain a reverse Ferrers array of 0's in the upper right corner and a complete R-L zigzag path of $(m+n-1)$\ 1's bordering it which begins with $a_{11}=1$ and  $a_{12}=1$, and ends at $a_{m-1,n}=1$ and $a_{mn}=1$. 
\end{proof}

Referring to the complete R-L zigzag path in Lemma \ref{lem:new2}, we define the 1's at the intersection of the horizontal segments of the path with its vertical segments to be the {\it crucial} 1's of the path. Note that in case of a 1 in the upper right corner, we also have a complete R-L zigzag path (with an empty reverse Ferrers diagram of 0's). We usually boldface the crucial 1's. The matrix $A$ contains a 312-pattern if and only if there is a 312-pattern using a crucial 1 (in the 3-position). Each crucial 1 has an associated {\it corner position}; these are the positions directly southwest of the crucial 1. In a maximal 312-avoiding matrix, the corner positions are all occupied by 1's called {\it corner} 1's; the reason is that if there were a 312-pattern using a corner 1, then the corner 1 can be replaced by its corresponding crucial 1 and give another 312-pattern.

\begin{example}{\rm \label{ex:crucial} We illustrate a complete R-L zigzag path with its crucial and corner 1's. The zigzag path is dark-shaded with its crucial 1's in boldface; the corner 1's are those in the light-shaded positions.
\[\left[\begin{array}{c|c|c|c|c|c|c|c|c|c|c|c}
\cellcolor[gray]{0.7}1&\cellcolor[gray]{0.7}1&\cellcolor[gray]{0.7}1&\cellcolor[gray]{0.7}\mathbf 1&&&&&&&&\\ \hline
&&\cellcolor[gray]{0.9} 1&\cellcolor[gray]{0.7}1&\cellcolor[gray]{0.7}1&\cellcolor[gray]{0.7}\mathbf 1&&&&&&\\ \hline
&&&&\cellcolor[gray]{0.9} 1&\cellcolor[gray]{0.7}1&&&&&&\\ \hline
&&&&&\cellcolor[gray]{0.7}1&\cellcolor[gray]{0.7}1&\cellcolor[gray]{0.7}\mathbf 1&&&\\ \hline
&&&&&&\cellcolor[gray]{0.9} 1&\cellcolor[gray]{0.7}1&\cellcolor[gray]{0.7}\mathbf 1&&&\\ \hline
&&&&&&&\cellcolor[gray]{0.9}1&\cellcolor[gray]{0.7}1&&&\\ \hline
&&&&&&&&\cellcolor[gray]{0.7}1&\cellcolor[gray]{0.7}\mathbf 1 &&\\ \hline
&&&&&&&&\cellcolor[gray]{0.9}1&\cellcolor[gray]{0.7}1&\cellcolor[gray]{0.7}1&\cellcolor[gray]{0.7}\mathbf 1\\ \hline
&&&&&&&&&&\cellcolor[gray]{0.9}1&\cellcolor[gray]{0.7}1\end{array}\right].\]
Notice that the corner 1's give induce another complete R-L zigzag path excluding the crucial 1's.
}\hfill{$\Box$}\end{example}

\begin{theorem}\label{th:312theorem}
Let $A=[a_{ij}]$ be a  $312$-avoiding  $m\times n$\  $(0,1)$-matrix. 
\begin{itemize}
\item[\rm (i)] 
$A$ has at most $2(m+n-2)$ $1$'s. 
\item[\rm (ii)] The maximum number of $1$'s in a  $312$-avoiding  $m\times n$\  $(0,1)$-matrix equals  $2(m+n-2)$.
\item[\rm (iii)] An $m\times n$  maximal $312$-avoiding $(0,1)$-matrix contains exactly  $2(m+n-2)$ $1$'s.
\end{itemize}
\end{theorem}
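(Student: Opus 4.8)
Throughout, write $c_{m,n}:=2(m+n-2)$. The plan is to route all three parts through a single statement:

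\medskip
\emph{Claim.} Every maximal $312$-avoiding $m\times n$ $(0,1)$-matrix has exactly $c_{m,n}$ ones.
\medskip

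\noindent Granting the Claim, (iii) is immediate; (i) follows because any $312$-avoiding matrix can be enlarged entrywise to a maximal one (flip $0$'s to $1$'s while $312$-avoidance survives) with at least as many $1$'s, so it has at most $c_{m,n}$ ones; and (ii) follows from (i) together with the $312$-avoiding matrix with $c_{m,n}$ ones already exhibited before the theorem --- first row and last column all $1$'s, with $A(1,n)$ a complete L--R zigzag path, for a total of $(m+n-1)+(m+n-3)=c_{m,n}$ ones. So everything reduces to the Claim, and we may assume $m,n\ge 3$, since for $\min\{m,n\}\le 2$ there is no $3\times 3$ submatrix and the assertion is the identity $mn=c_{m,n}$, valid when $\min\{m,n\}=2$.

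To prove the Claim for a maximal $312$-avoiding $A$, split on the corner entry $a_{1n}$. If $a_{1n}=1$, maximality forces the first row and last column to be all $1$'s, and, by the observation recorded in the text, the $(m-1)\times(n-1)$ matrix $A(1,n)$ is then $12$-avoiding; being maximal as such, it carries exactly $(m-1)+(n-1)-1=m+n-3$ ones by Lemma~\ref{lem:12avoiding}, and adding the $m+n-1$ ones of the first row and last column gives $c_{m,n}$. If $a_{1n}=0$, apply Lemma~\ref{lem:new2} with $k=3$: the $0$'s of $A$ in the upper-right corner form a reverse Ferrers array bordered by a complete R--L zigzag path $P$ with exactly $m+n-1$ ones, and maximality fills the corner position southwest of each crucial $1$ of $P$. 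Since the first row of $A$ consists of $P$'s $1$'s followed by the reverse-Ferrers $0$'s, and the last column of $A$ consists of $0$'s sitting on top of $P$'s $1$'s, every $1$ of $A$ off $P$ lies in the $(m-1)\times(n-1)$ block $B=A[\{2,\dots,m\},\{1,\dots,n-1\}]$. The decisive step is then to show that $B$ contains exactly $m+n-3$ ones; together with $P$ this yields $c_{m,n}$.

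The heart of the matter, and the step I expect to be the main obstacle, is exactly that count for $B$. The intended mechanism is that the $1$'s of $A$ lying in $B$ are governed by a ``second-level'' bordering structure one dimension down: the corner $1$'s, sitting just southwest of the crucial $1$'s of $P$, play for the configuration strictly below $P$ the role that the crucial $1$'s of $P$ played for $A$, seeding a complete R--L zigzag path inside $B$ as indicated after Example~\ref{ex:crucial}; and maximality, via the reduction ``every $312$-pattern of $A$ must use a crucial $1$ of $P$ in the $3$-position'' set up in the paragraph before the theorem, forbids any further $1$ below that second-level path --- flipping such a $1$, or supplying a missing $1$ on the second-level path, would combine with a suitable crucial $1$ of $P$ (in the $3$-position) and a companion $1$ on $P$ or on the second-level path to produce a $312$-pattern. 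Making this rigorous requires a careful case analysis of where a prospective $1$ sits relative to the turns of $P$ and of the second-level path, which is precisely the bookkeeping the crucial/corner-$1$ language was introduced to manage. It is worth emphasizing that a crude approach --- cutting $A$ into sub-rectangles and summing the corresponding maxima $c_{p,q}$ --- cannot work, since it overcounts by a multiplicative constant; the fact that Lemma~\ref{lem:new2} supplies a \emph{single} bordering zigzag path (rather than a recursion running all the way to the diagonal) is what makes the bound come out sharp.
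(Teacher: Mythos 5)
Your reduction of (i)--(iii) to the single claim that every maximal $312$-avoiding matrix has exactly $2(m+n-2)$ ones is sound, and your treatment of the case $a_{1n}=1$ is essentially the discussion preceding Example~\ref{ex:upper}. But the case $a_{1n}=0$, which you yourself flag as the heart of the matter, is not proved, and the mechanism you sketch for it is incorrect. You assert that in a maximal matrix the corner $1$'s seed a second complete R--L zigzag path inside $B$ and that maximality forbids any $1$ below that second path. Maximal $312$-avoiding matrices do not have this structure: the $1$'s lying strictly below the bordering path need only form an acyclic configuration with no submatrix of the form (\ref{eq:bipartite}), not a second zigzag path with empty region beneath it. The second and third matrices in Example~\ref{ex:upper}, the ``vertical shadow plus last row'' construction in the paper's proof of (ii), and the output matrix of Example~\ref{ex:algorithm} are all maximal (indeed maximum) and have off-path $1$'s scattered far below any single second path. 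So the count you need --- exactly $m+n-3$ off-path $1$'s --- cannot be obtained the way you indicate, and you give no argument for either the upper bound or the exactness. (A smaller slip: ``$B$ contains exactly $m+n-3$ ones'' is not what you mean, since $B$ also contains part of the bordering path; the count concerns the $1$'s strictly below the path.)

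For comparison, the paper closes this gap with two separate arguments. For the upper bound, it observes that below the bordering zigzag path no $2\times 2$ submatrix of the form (\ref{eq:bipartite}) can occur (its two diagonal $1$'s together with a crucial $1$ of the path give a $312$-pattern), so the bipartite graph on $(m-1)+(n-1)$ vertices whose edges are the below-path $1$'s is acyclic; hence there are at most $m+n-3$ such $1$'s and at most $2(m+n-2)$ in total. For the exact count in a maximal matrix (part (iii)), it runs an induction: locate the first $1$ of the last row beyond column $1$, say in column $q$, let row $p$ carry the last path $1$ in that column, split $A$ into the two overlapping maximal $312$-avoiding submatrices on rows $1,\dots,p,m$, columns $1,\dots,q$ and rows $p,\dots,m$, columns $q,\dots,n$, and add the inductive counts, subtracting the overlap. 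Neither idea appears in your proposal, so the central claim --- and with it all three parts of the theorem --- remains unproven.
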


\begin{proof}  Assume $A$ is maximal 312-avoiding. By Lemma \ref{lem:new2},
$A$ contains a complete R-L zigzag path of $(m+n-1)$\ $1$'s connecting the $(1,1)$ then $(1,2)$-positions with the $(m-1,n)$ then $(m,n)$-positions, where there is a 1 in all positions of this path, and above and to the right of  this path are all $0$'s forming a reverse Ferrers array. All corner positions are occupied by 1's. Now delete row $1$ and column $n$ of $A$ to obtain an  $(m-1)\times (n-1)$ matrix  $A'$ whose unspecified entries are the corresponding  unspecified entries of  $A$ (that is, the unspecified entries of $A$ below the zigzag path).  The matrix $A'$ has nonvacuous rows and columns. Since $A$ does not contain a $312$-pattern, 
 below the zigzag path there cannot be  a $2\times 2$ submatrix of the form
\begin{equation}\label{eq:bipartite}\left[\begin{array}{cc} 1&1\\ *&1\end{array}\right]\end{equation}
 for then the two diagonal 1's of this submatrix, along with a crucial 1,  would be part of a 312-pattern.
 
The 1's of $A$ below the zigzag path (and so the corresponding 1's of $A'$) determine a biadjacency matrix of a bipartite graph $G$ contained in the complete bipartite graph $ K_{m-1,n-1}$ whose edges correspond to these 1's. 
We claim that  $G$ cannot contain a cycle. For, if $G$ contained a cycle $\gamma$ (corresponding therefore to a collection of 1's with exactly two 1's in a certain  set of rows and columns), we see that $A$, and hence $A'$, contains a submatrix of the form (\ref{eq:bipartite}) below the zigzag path: Starting from the  lowest 1 in the rightmost column with a 1, there is a 1 above it, then a 1 to the left of this second 1, and this 
determines  a submatrix of the form 
(\ref{eq:bipartite}). Thus $G$ cannot have any cycles and so, since $G$ has $(m+n-2)$ vertices,  there are at  most $(m+n-3)$\ 1's of $A$ below the zigzag path. Hence the number of 1's of $A$ is at most
\[(m+n-1)+(m+n-3)=2(m+n-2).\]
This proves  assertion  (i) of the theorem.

To prove assertion (ii) we show how to construct a canonical 312-avoiding  $m\times n$\  $(0,1)$-matrix with  $2(m+n-2)$ $1$'s. 
Once we have chosen the 1's of the complete R-L zigzag path (and so the 0's above it), we are left with a staircase array.
We can insert  1's  in this staircase array and  obtain a 312-avoiding $(0,1)$-matrix as long  as the new 1's do not create a pattern as in (\ref{eq:bipartite}) and, in particular, correspond to a tree.

To obtain a maximal 312-avoiding matrix, the additional 1's (including the corner 1's) should determine a spanning tree of $K_{m-1,n-1}$ without a submatrix of the form (\ref{eq:bipartite}) 
giving therefore $(m+n-3)$ more 1's for a total of $(m+n-1)+(m+n-3)=2(m+n-2)$\ 1's. 
One way of doing this is to form the {\it vertical 
shadow}
 of any R-L zigzag path illustrated below and then augment with the remaining elements in row $n$  with the result being the 1's in the mediums-shaded positions:
 \[\left[\begin{array}{c|c|c|c|c|c|c|c|c|c|c|c}
1&1&1&1&\cellcolor[gray]{0.8} \mathbf1&&&&&&&\\ \hline
  &&&\cellcolor[gray]{0.6} 1&1&&&&&&&\\ \hline
   &&&\cellcolor[gray]{0.6} 1&1&&&&&&&\\ \hline
    &&&\cellcolor[gray]{0.6} 1&1&1&1&\cellcolor[gray]{0.8} \mathbf 1&&&&\\ \hline
     &&&&&&\cellcolor[gray]{0.6} 1&1&&&&\\ \hline
      &&&&&&\cellcolor[gray]{0.6} 1&1&1&\cellcolor[gray]{0.8} \mathbf 1&&\\ \hline
       &&&&&&&&\cellcolor[gray]{0.6} 1&1&&\\ \hline
        &&&&&&&&\cellcolor[gray]{0.6} 1&1&&\\ \hline
         &&&&&&&&\cellcolor[gray]{0.6} 1&1&1&\cellcolor[gray]{0.8} \mathbf 1\\ \hline
          &&&&&&&&&&\cellcolor[gray]{0.6} 1&1\\ \hline
           &&&&&&&&&&\cellcolor[gray]{0.6} 1&1\\ \hline
           \cellcolor[gray]{0.6}1&\cellcolor[gray]{0.6}1&\cellcolor[gray]{0.6}1&\cellcolor[gray]{0.6}1&\cellcolor[gray]{0.6}1&\cellcolor[gray]{0.6}1&\cellcolor[gray]{0.6}1&\cellcolor[gray]{0.6}1&\cellcolor[gray]{0.6}1&\cellcolor[gray]{0.6}1&\cellcolor[gray]{0.6} 1&1\end{array}\right].
            \]
This gives an additional $(m-1)+(n-2)=m+n-3$\ 1's for  a total number of 1's equal to
\[(m+n-1)+(m-1)+(n-2)=2(m+n-2),\]
as wanted, and it is 312-avoiding since the crucial 1's are not part of a 312-pattern.            
This shows that the  upper bound in (i) can be attained and completes the proof of assertion (ii).

We now prove assertion (iii). Assume that $A$ is maximal with a given complete R-L zigzag path of $(m+n-1)$\ 1's connecting the $(1,1)$ position to the $(m,n)$ position. Since $A$ is maximal, its $(m,1)$-entry equals 1. If all other entries in row $m$ equal 0, then let $A'$ be the $(m-1)\times n$ matrix obtained from $A$
by deleting row $m$. Then $A'$ is a maximal 312-avoiding and hence by induction has $2(m-1+n-2)=2(m+n-2)-2$\; 
 1's. Hence $A$ has $2(m+n-2)$\; 1's. Thus we may assume that row $m$ of $A$ contains at least one other 1. Let the first such 1 occur in column $q$.       Then column $q$ hits our zigzag path in a 1. Let the last 1 of the zigzag path in column $q$ be in row $p$. There are two possibilities to consider: (a) column $q$ contains exactly one 1 of the zigzag path  whose row then  contains a crucial 1, or (b) column $q$ contains two or more such 1's and thus column $q$ contains a crucial 1.
 
 First assume (a). Then row $p$ contains a crucial 1, say in column $k$ where we have that $k>q$. Since $A$ is 312-avoiding, the  $(m-1-p)\times (q-1)$ submatrix of $A$ determined by rows $(p+1),\ldots,(m-1)$ and columns $1,\ldots,(q-1)$ is a zero matrix. Let the first 1 of the zigzag path occur in column $l$ where we have that $l<q$;  column $l$ then contains a crucial 1. Then the submatrices $A'$ of $A$ determined by rows $1,\ldots,p,m$ and columns $1,\ldots,q$ is maximal 312-avoiding and so is the submatrix $A''$ of $A$ determined by rows $p,p+1,\ldots,m$ and columns $q,q+1,\ldots,n$. Thus by induction the number of 1's in $A$ equals
 \[2((p+1)+q-2)+2((m-(p-1))+(n-(q-1))-2)-2= 2(m+n-2).\]
 Note that the last $-2$ on the left in this equation accounts for the two 1's that are in the column overlapping $A'$ and $A''$.
 
 Now assume (b). Then similarly   we have maximal $(p+1)\times q$ and $(m-(p-1))\times (n-(q-1))$ submatrices which, by induction, give
 $2(m+n-2)$\ 1's in $A$.
\end{proof}

\begin{corollary}\label{cor:max}
Every maximal $312$-avoiding $m\times n$  $(0,1)$ contains the same number of $1$'s.\hfill{$\Box$}
\end{corollary}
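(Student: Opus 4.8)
The plan is to read the corollary straight off part (iii) of Theorem \ref{th:312theorem}. That part asserts that every $m\times n$ maximal $312$-avoiding $(0,1)$-matrix contains exactly $2(m+n-2)$ ones. Since $2(m+n-2)$ depends only on the dimensions $m$ and $n$ and not on the particular matrix, any two $m\times n$ maximal $312$-avoiding matrices have the same number of ones. So once Theorem \ref{th:312theorem}(iii) is in hand, no further argument is needed; the corollary is literally a restatement of it.

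It is worth recording that this mirrors the situation for $12\cdots k$-avoiding matrices, where the analogous ``saturation'' conclusion was extracted from part (iii) of Theorem \ref{th:no123}: there one exhibited, for any $12\cdots k$-avoiding matrix with fewer than $(k-1)(m+n-(k-1))$ ones, a $0$ that can be switched to a $1$ while preserving the pattern-avoidance, so the maximal matrices are precisely those attaining the extremal count. For $312$ the route is slightly different — instead of an explicit augmentation step, the inductive dissection in the proof of Theorem \ref{th:312theorem}(iii) already shows every maximal matrix meets the bound $2(m+n-2)$ of part (i) — but the upshot is identical: maximality forces the maximum.

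There is no real obstacle here; the only thing worth double-checking is that the case analysis underpinning Theorem \ref{th:312theorem}(iii) genuinely exhausts all maximal matrices. It does: by Lemma \ref{lem:new2} every maximal $312$-avoiding matrix carries a complete R-L zigzag path of $(m+n-1)$ ones together with a $1$ in the lower-left corner, and the induction in that proof branches on the location of the first extra $1$ in the bottom row relative to that path (cases (a) and (b)), with the all-zero remainder of the bottom row handled by deleting row $m$. Granting Theorem \ref{th:312theorem}, the corollary follows at once, so I would present it simply as ``Immediate from Theorem \ref{th:312theorem}(iii), since $2(m+n-2)$ depends only on $m$ and $n$.''
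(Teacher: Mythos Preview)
Your proposal is correct and matches the paper's approach exactly: the corollary is stated with no proof beyond the end-of-proof box, since it is immediate from Theorem~\ref{th:312theorem}(iii). Your one-line justification, ``Immediate from Theorem~\ref{th:312theorem}(iii), since $2(m+n-2)$ depends only on $m$ and $n$,'' is precisely what is intended.
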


The following algorithm,  carries out in all possible ways, always constructs a 312-avoiding $(0,1)$-matrix with the maximum number of 1's.

\medskip
\centerline{\bf Algorithm for a Maximal 312-avoiding Matrix}
\medskip
\begin{itemize}
\item[\rm (i)] Let $A$ be an  $m\times n$ matrix without any entries specified.
\item[\rm (ii)] Put $(m+n-1)$ 1's in $A$ in the positions of any complete R-L zigzag path connecting the $(1,1)$ then $(1,2)$ positions with the $(m-1,n)$ then $(m,n)$ positions. Also put a 1 in position $(m,1)$. (One could also put 1's in the corner positions but that is not necessary.)
\item[\rm (iii)] Put a 1 in position $(m,q)$ for some $q$ with  $1<q<n$. Let the last 1 in column $q$ of $A$ be in row $p$.
\item[\rm (iv)] Repeat step (iii) recursively to the  submatrix of $A$ determined by rows $1,2,\ldots,p,m$ and columns $1,\ldots,q$, and to the submatrix
 determined by rows $p,p+1,\ldots,m$ and columns $q,q+1,\ldots,n$.
 \item[\rm (v)] Put 0's in all remaining positions of $A$ and  
output the resulting matrix $A$.
\end{itemize}

\begin{example}\label{ex:algorithm} {\rm
Let $m=n=10$ and consider the following complete R-L zigzag path in a $10\times 10$ matrix where we also put a 1 in the $(10,1)$-position and the corner positions:
\[\left[\begin{array}{c|c|c|c|c|c|c|c|c|c}
1&1&1&\mathbf 1&&&&&&\\ \hline
&&&1&&&&&&\\ \hline
&&&1&1&1&\mathbf 1&&&\\ \hline
&&&&&&1&&&\\ \hline
&&&&&&1&&&\\ \hline
&&&&&&1&\mathbf1&&\\ \hline
&&&&&&&1&&\\ \hline
&&&&&&&1&1&\mathbf 1\\ \hline
&&&&&&&&&1\\ \hline
1&&&&&&&&&1\end{array}\right]\rightarrow 
\left[\begin{array}{c|c|c|c|c|c|c|c|c|c}
1&1&1&\mathbf 1&&&&&&\\ \hline
&&&1&&&&&&\\ \hline
&&&1&1&1&\mathbf 1&&&\\ \hline
0&0&0&0&&&1&&&\\ \hline
0&0&0&0&&&1&&&\\ \hline
0&0&0&0&&&1&\mathbf1&&\\ \hline
0&0&0&0&&&&1&&\\ \hline
0&0&0&0&&&&1&1&\mathbf 1\\ \hline
0&0&0&0&&&&&&1\\ \hline
1&0&0&0&1&&&&&1\end{array}\right]\quad (q=5, p=3).
\]
Now repeat with
\[\left[\begin{array}{c|c|c|c|c}
1&1&1&\mathbf 1&\\ \hline
&&&1&\\ \hline
&&&1&\mathbf 1\\ \hline
1&&&&1\end{array}\right]\mbox{ and }
\left[\begin{array}{c|c|c|c|c|c}
1&1&\mathbf 1&&&\\ \hline
&&1&&&\\ \hline
&&1&&&\\ \hline
&&1&\mathbf 1&&\\ \hline
&&&1&&\\ \hline
&&&1&1&\mathbf 1\\ \hline
&&&&&1\\ \hline
1&&&&&1\end{array}\right].\]
Iterating we get as one possibility
\[\left[\begin{array}{c|c|c|c|c}
1&1&1&\mathbf 1&\\ \hline
&1&1&1&\\ \hline
&1&&1&\mathbf 1\\ \hline
1&1&&1&1\end{array}\right]\mbox{ and }
\left[\begin{array}{c|c|c|c|c|c}
1&1&\mathbf 1&&&\\ \hline
1&1&1&&&\\ \hline
1&&1&&&\\ \hline
1&&1&\mathbf 1&&\\ \hline
&&1&1&&\\ \hline
&&1&1&1&\mathbf 1\\ \hline
&&1&1&1&1\\ \hline
1&&1&&&1\end{array}\right]\]
and hence the maximal $10\times 10$ 312-avoiding matrix
\[\left[\begin{array}{c|c|c|c|c|c|c|c|c|c}
1&1&1&\mathbf 1&&&&&&\\ \hline
&1&1&1&&&&&&\\ \hline
&1&&1&1&1&\mathbf 1&&&\\ \hline
&&&&1&1&1&&&\\ \hline
&&&&1&&1&&&\\ \hline
&&&&1&&1&\mathbf1&&\\ \hline
&&&&&&1&1&&\\ \hline
&&&&&&1&1&1&\mathbf 1\\ \hline
&&&&&&1&1&1&1\\ \hline
1&1&&1&1&&1&&&1\end{array}\right].\]

}\hfill{$\Box$}\end{example}

\section{Coda}
In this final section we discuss some related ideas connected with our investigations reported above. We put forth a conjecture and prove it in a special case. 

Let $k\ge 4$ and consider a $k12\cdots(k-1)$-avoiding  $m\times n$ $(0,1)$-matrices $A=[a_{ij}]$. (The case of $k=3$ has been dealt with in Section 3.) If $a_{1n}=1$, then $A$ contains at most $(k-1)(m+n-(k-1))$\ 1's. This follows from the discussion preceding Example \ref{ex:upper} and the fact that a $12\cdots(k-1)$-avoiding $(m-1)\times (n-1)$ $(0,1)$-matrix $A'$ has at most $(k-2)((m-1)+(n-1)-(k-2))$ 1's. The maximality condition implies that all the entries in $A$ in its first row and last column are 1 and hence the number of 1's in $A$ is at most 
\[(m+n-1)+(k-2)((m-1)+(n-1)-(k-2))=(k-1)(m+n-(k-1)).\]
Moreover, there are  $m\times n$ $k12\cdots(k-1)$-avoiding  $m\times n$ $(0,1)$-matrices with this number of 1's.  For example, with $m=n=8$ and $k=5$ we have
\[\left[\begin{array}{c|c|c|c|c|c|c|c}
1&1&1&1&1&1&1&1\\ \hline
&&&1&1&1&1&1\\ \hline
&&1&1&1&1&1&1\\ \hline
&1&1&1&1&1&1&1\\ \hline
1&1&1&1&1&1&&1\\ \hline
1&1&1&1&1&&&1\\ \hline
1&1&1&1&&&&1\\ \hline
1&1&1&&&&&1\end{array}\right] \mbox{with 48\ 1's.}\]
Note that it is only necessary to verify that there is not  a $51234$-sequence of 1's where the first 1 occurs in the $(1,8)$-position.

We conjecture that this works in general for $m$, $n$ and $k\ge 4$.

\begin{conjecture}{\rm \label{con:one}
  A $k12\cdots(k-1)$-avoiding  $m\times n$ $(0,1)$-matrix $A$ contains at most $(k-1)(m+n-(k-1))$ 1's and such a $(0,1)$-matrix with fewer than this number of 1's contains a 0 that can be replaced by a 1 resulting in a $k12\cdots(k-1)$-avoiding matrix.
}\hfill{$\Box$}\end{conjecture}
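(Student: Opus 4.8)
The plan is to reduce the general case to the case $a_{1n}=1$, which is already handled by the discussion preceding the statement. So suppose $A$ is $k12\cdots(k-1)$-avoiding and $a_{1n}=0$. The first step is to invoke Lemma~\ref{lem:new2}: if $A$ is in fact \emph{maximal} $k12\cdots(k-1)$-avoiding, then the $0$'s in the upper-right corner form a reverse Ferrers array bordered by a complete L-R zigzag path of $(m+n-1)$\ $1$'s (the ``crucial'' $1$'s being the corners of that path). Deleting row $1$ and column $n$ leaves an $(m-1)\times(n-1)$ staircase array $A'$ below the path. One then wants two things: (a) a bound on the number of $1$'s that can be placed in $A'$ without creating a $12\cdots(k-1)$-pattern \emph{relative to the crucial $1$'s}, i.e. without completing a forbidden pattern in $A$; and (b) that when $A$ has fewer than $(k-1)(m+n-(k-1))$\ $1$'s, a $0$ can still be switched to a $1$.

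For the counting bound, I would argue as follows. A crucial $1$ at position $(r,c)$ sits at the top of a vertical run and the right end of a horizontal run of the zigzag path; combined with a $12\cdots(k-2)$-pattern strictly below-and-right of $(r,c)$ in the staircase region, it would yield a $k12\cdots(k-1)$-pattern (the crucial $1$ plays the role of the leading ``$k$'', and everything to its lower-right is increasing in both coordinates). Hence the submatrix of $A'$ lying weakly southeast of any crucial $1$ must itself be $12\cdots(k-2)$-avoiding. The key structural observation is that the staircase region $A'$ is \emph{covered} by these southeast-quadrants of the crucial $1$'s — more precisely, each anti-diagonal (Toeplitz diagonal) of $A'$ lies entirely in the southeast-quadrant of some crucial $1$, because the zigzag path meets every such diagonal. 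Applying Lemma~\ref{lem:12avoiding2} (the staircase version of the diagonal count) to $A'$ with the forbidden pattern $12\cdots(k-2)$ then gives that $A'$ has at most $(k-2)\big((m-1)+(n-1)-(k-2)\big)$\ $1$'s, exactly as in the $a_{1n}=1$ case; adding the $(m+n-1)$ path $1$'s yields the bound $(k-1)(m+n-(k-1))$.

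For the saturation part, I would mimic the proof of Theorem~\ref{th:no123}(iii) inside the staircase region. If the path-bordered reverse Ferrers array of $0$'s is not already as large as maximality forces (equivalently, row $1$ has a $0$ followed by a $1$, or some analogous local failure), a single $0$ in row $1$ or column $n$ can be switched. Otherwise the zigzag path is fixed and we work in $A'$: if $A'$ has fewer than the maximum number of $1$'s for a $12\cdots(k-2)$-avoiding staircase matrix, then by the staircase analogue of Theorem~\ref{th:no123}(iii) — tracing leftmost-$1$'s into zigzag subpaths exactly as there, now within the staircase shape — some $0$ of $A'$ can be switched to a $1$ without creating a $12\cdots(k-2)$-pattern in the quadrant below any crucial $1$, hence without creating a $k12\cdots(k-1)$-pattern in $A$.

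The main obstacle I expect is the claim that Lemma~\ref{lem:12avoiding2} can be applied to $A'$ with the stronger forbidden pattern $12\cdots(k-2)$ ``diagonal by diagonal'': one must check that a $12\cdots(k-2)$-pattern cannot straddle two different crucial-$1$ quadrants in a way that escapes the per-quadrant bound yet still fails to combine with any single crucial $1$ — i.e. that the covering of $A'$ by southeast-quadrants is fine enough that the global count really does collapse to the diagonal count. Equivalently, one must rule out ``long'' increasing sequences in $A'$ that are increasing across quadrant boundaries but whose first element has no crucial $1$ to its upper-left; verifying that the geometry of the zigzag path forecloses this (using that the path is a \emph{complete} R-L/L-R path, so every row and column below it meets a crucial $1$ suitably) is the delicate point, and is presumably where the full generality of the conjecture is genuinely hard — the authors note they have not pushed this through for all $k$.
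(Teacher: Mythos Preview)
This statement is a \emph{conjecture}; the paper does not prove it in general. What the paper actually establishes is only (a) the bound $(k-1)(m+n-(k-1))$ under the extra hypothesis $a_{1n}=1$ (the short paragraph preceding the conjecture), and (b) an outlined verification of the bound in the very special case where the upper-right corner contains exactly one $0$, done by a direct diagonal-by-diagonal count. The authors explicitly say the general case has eluded them. So there is no ``paper's own proof'' to compare against, and any purported full proof has to be scrutinized on its own merits.

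Your argument has a basic geometric error that makes the whole reduction collapse. In the pattern $k12\cdots(k-1)$ the ``$k$'' entry sits in the \emph{top row and rightmost column} of the pattern, while the increasing block $12\cdots(k-1)$ (an $I_{k-1}$) lies in \emph{strictly lower rows and strictly smaller columns}. Thus a crucial $1$ at $(r,c)$, acting as the ``$k$'', is completed to a forbidden pattern by a $12\cdots(k-1)$-pattern (length $k-1$, not $k-2$) in the region \emph{below and to the left} of $(r,c)$, not ``below-and-right''. Your southeast-quadrant formulation is therefore the wrong region, and the length of the increasing subpattern you forbid is off by one. With the correct direction, the regions one must control are the lower-\emph{left} quadrants of the crucial $1$'s; these are nested in a quite different way (the quadrant of a crucial $1$ farther down the path contains that of an earlier one in the column direction but not the row direction), and the ``each Toeplitz diagonal of $A'$ lies in some southeast-quadrant'' covering claim simply fails once the direction is corrected. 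In particular, there is no reason the entire staircase $A'$ should be $12\cdots(k-1)$-avoiding: a long increasing run in $A'$ that begins far to the right and low relative to every crucial $1$ need not combine with any single crucial $1$ to form the forbidden pattern, so the hoped-for reduction to Theorem~\ref{th:no123}/Lemma~\ref{lem:12avoiding2} does not go through.

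Even after repairing direction and length, the obstacle you flag in your last paragraph is genuine and, as far as the paper is concerned, unresolved: one cannot immediately bound the $1$'s in $A'$ by a clean $(k-2)$-per-diagonal count, because the constraint coming from the crucial $1$'s is local (lower-left of each crucial $1$) rather than global on $A'$. The paper's handling of the ``single $0$'' case is an ad hoc diagonal count that exploits the fact that there is only one crucial $1$; extending that to many crucial $1$'s is exactly the missing idea.
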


The upper bound in Conjecture \ref{con:one} can also be established in case that there is a single $0$ in the upper right corner, and we now outline this.  Assume that $A$ has the maximum possible number of 1's. Then the first row and last column are all 1's except for a $0$ in the $(1,n)$-position. For some $p$, the second row of $A$ consists of $(n-p-1)$ \  0's followed by $p$ 1's and then the 1 in the last column. Similarly, for some $q$, the second column has an initial 1 in its first row followed by $q$ 1's and then $(m-1-q)$\ 0's. Thus the 1 in position $(2,n-1)$ is counted by both $p$ and $q$. Without loss of generality we may assume that $p\ge q$. So we have a total of  $(m+n-2)$\  1's  in row 1 and column $n$. In addition in the second row we have $p$ more 1's and in the next-to-last column we have an additional $(q-1)$\  1's.   The number of 1's in the diagonals, beginning with  the first column from its last position to its position in row 2, counted up to and including column $(n-2)$, is at most 
\[1+2+\cdots+(k-3)+(m-(k-2))(k-2).\]
Diagonals corresponding to the entries in row 3 beginning with column 2,  contain at most the following number of 1's:
\begin{itemize}
\item $(n-p+2)$ of them with at most $(k-2)$ 1s;
\item $(p-q)$ of them   with at most $(k-3)$ 1's;
\item $(q-(k-1))$ of them with at most $(k-4)$ 1's;
\item Then diagonals with at most, respectively, $1, 2,\ldots,(k-3)$ 1's.
\end{itemize}
So the number of 1's is at most
\[(m+n-2)+(p+q-1) +2(1+\cdots+(k-3))+(m-k+2)(k-2)+\]
\[(n-p-2)(k-2)+
(p-q)(k-3) +(q-k+1) (k-4).\]
This simplifies to the desired bound:
\[(k-1)(m+n)-(k-1)^2.\]
Notice that  the  $(q-(k+1))$ diagonals with at most $(k-4)$ 1's take into account  the last 1 in column $(n-1)$. Also note that for equality to occur we must have $p,q\ge k-1$.

Another way to extend the classical  notion of pattern-avoidance in permutations to matrices is the following which  for clarity we discuss in the context of 123-avoidance and 312-avoidance but can be formulated more generally.
Let $A$  be  an $n\times n$ $(0,1)$-matrix.
Then   $A$ is {\it $123$-permutation-avoiding} provided every permutation matrix $P$ with $P\le A$ is a $123$-avoiding permutation matrix. Similarly, the matrix $A$ is {\it $312$-permutation-avoiding} provided every permutation matrix $P$ with $P\le A$ is a $312$-avoiding permutation matrix. Thus in both cases, for each $i$, row $i$ of the  matrix $A$ restricts the possibilities for the $i$th term in a permutation of $\{1,2,\ldots,n\}$. If there does not exist   a permutation matrix $P$ with $P\le A$ (entrywise order), then $A$ is both 123-permutation-avoiding and 312-permutation-avoiding. In investigating $n\times n$  $123$-avoiding and $312$-permutation-avoiding $(0,1)$-matrices $A$, it is therefore natural to assume that $A$ has {\it total support} meaning that
$A\ne O$ and   every 1 of $A$ is part of some permutation matrix $P$ with $P\le A$. A property stricter than  having total support is the property of 
 {\it full indecomposability} meaning that  $A\ne O$ has total support and, after row and column permutations,  $A$ is not a non-trivial direct sum  of square matrices (equivalently, deleting any row and any column of $A$, leaves a matrix containing a permutation matrix). But row and column permutations clearly have an effect on pattern-avoiding properties. As a trivial example,
 \[\left[\begin{array}{cc} 0&1\\ 1&0\end{array}\right] \] is 12-permutation-avoiding and  12-avoiding, but permuting to get
 \[\left[\begin{array}{cc} 1&0\\ 0&1\end{array}\right]\]
 results in a matrix which is not 12-permutation-avoiding matrix and not 12-avoiding.
 
 \begin{question} {\rm \label{qu:count}
 What is the maximum number of 1's in an $n\times n$ $(0,1)$-matrix  which is 123-permutation-avoiding and has total support  (resp.,  $312$-permutation-avoiding)?
 }\hfill{$\Box$}\end{question}

Recall that the {\it permanent} of a square $(0,1)$-matrix $A$ is given by 
\[\mbox{per}(A) =\sum_{\pi \in {\mathcal S}_n} \prod_{i=1}^n a_{i,\pi (i)}\]
and counts the number of permutation matrices $P\le A$.

\begin{question}{\rm \label{qu:perm} What is the maximum permanent of an $n\times n$ $(0,1)$-matrix   which is 123-avoiding (resp.,  123-permutation-avoiding)? 
 These questions are equivalent to finding how many   123-avoiding  permutations can be in $n\times n$  $123$-avoiding (respectively,
$123$-permutation-avoiding ) $(0,1)$-matrices. There are similar questions with  $312$ replacing $123$. Indeed, one may ask for the number of 123-avoiding and 312-avoiding permutations in any $n\times n$ (0,1)-matrix, indeed the number of $\sigma$-avoiding and $\sigma$-avoiding-permutations in any $n\times n$ (0,1)-matrix where $\sigma\in {\mathcal S}_k$.

This suggests the following definition.
Let $k\le n$ and  $\sigma\in {\mathcal S}_k$, and 
let ${\mathcal S}_n({\bar \sigma)}$ be the set of $\sigma$-avoiding permutations in ${\mathcal S}_n$. Define the {\it $\sigma$-avoiding permanent} of an $n\times n$ matrix $A$ to be
\[\mbox{per}_{\bar\sigma}(A) =\sum_{\pi \in {\mathcal S}_n(\bar \sigma)} \prod_{i=1}^n a_{i,\pi (i)}.\]
Taking $A$ to be $J_n$  the $n\times n$ matrix of all 1's
and $\sigma=(1,2,3) \mbox{ or } (3,1,2)$, we have 
\[\mbox{per}_{\bar\sigma}(J_n) =C_n=
\frac{{{2n}\choose n}}{n+1}.\]
If $A$ is $\sigma$-permutation-avoiding, then $\mbox{per}(A)=
\mbox{per}_{\sigma}(A)$. Are there other families of $n\times n$ matrices $A_n$  for which $\mbox{per}_{\bar\sigma}(A_n)$ has a nice formula?}\hfill{$\Box$}\end{question}

\begin{question}{\rm \label{qu:maxper}
Let $\sigma\in {\mathcal S}_k$. What is the maximum $\sigma$-avoiding permanent of an $n\times n$ fully indecomposable  $(0,1)$-matrix $A$ and which matrices attain this maximum?
What is the maximum number of 1's in an $n\times n$ fully indecomposable  $\sigma$-permutation-avoiding $(0,1)$-matrix $A$ and which matrices attain this maximum?}\hfill{$\Box$}
\end{question}

if $\sigma=312$, a
 guess is that the maximum $\sigma$-avoiding permanent occurs for the following matrix (illustrated for $n=5$):
\[\left[\begin{array}{ccccc}
1&1&1&0&0\\ 
1&1&1&1&0\\ 
1&0&1&1&1\\ 
1&0&0&1&1\\ 
1&0&0&0&1\end{array}\right],\]
which has permanent equal to 12.

Another related question is the following.

\begin{question}\label{qu:two} {\rm Let $k$ and $n$ be positive integers with $k<n$.
Let $\sigma\in {\mathcal S}_k$.
  When can a $\sigma$-avoiding permutation of a subset of the integers in $\{1,2,\ldots,n\}$ be extended to a $\sigma$-avoiding permutation of $\{1,2,\ldots,n\}$? For example, if $k=3$ and $\sigma=(3,1,2)$, then $(4,6,1)$ can be extended to $(2,{\bf 4},5,{\bf 6},3,{\bf 1})$ which is $312$-avoiding.
}\hfill{$\Box$}
\end{question}

More background material on $(0,1)$-matrices can be found in \cite{RAB}.
\smallskip

\noindent
{\bf Acknowledgement:} We are very  grateful to Balazs Keszegh for pointing out to us the  somewhat larger context for our investigations, including  the references  \cite{Furedi, BK, Marcus,SP1, SP2}, and for other comments.

\end{document}